\newcounter{i} 
\newtoks\striche 
\newcommand{\smatvdots}{\vphantom{\int\limits^x}\smash{\vdots}} 
\newcommand{\R}{\mathbb{R}}
\newcommand{\N}{\mathbb{N}} 
\newcommand{\C}{\mathbb{C}}
\newcommand{\sfC}{\mathsf{C}}
\newcommand{\Lp}[1]{\mathsf{L}^{#1}} 
\newcommand{\Hspace}{\mathsf{H}}
\newcommand{\iu}{\mathrm{i}} 
\newcommand{\diffd}{\mathrm{d}} 
\newcommand{\dx}[1][x]{\,\diffd#1}
\newcommand{\cl}[2][]{\overline{#2}\ifthenelse{ \equal{#1}{} }{}{^{#1}}} 
\newcommand{\quspace}[3][]{#2/\mspace{1mu}#3} 
\DeclareMathOperator{\ran}{ran}
\DeclareMathOperator{\dom}{dom}
\DeclareMathOperator{\Div}{div}
\renewcommand{\div}{\Div}
\newcommand{\grad}{\nabla}
\DeclarePairedDelimiter{\set}{\{}{\}}
\DeclarePairedDelimiter{\norm}{\lVert}{\rVert}
\DeclarePairedDelimiterX{\dset}[2]{\{}{\}}{#1\;\delimsize\vert\;\mathopen{} #2}
\DeclarePairedDelimiterX{\scprod}[2]{\langle}{\rangle}{#1,#2}
\DeclarePairedDelimiterX{\dualprod}[2]{\langle}{\rangle}{#1,#2}
\renewcommand{\Re}{\operatorname{Re}}
\newcommand{\dualsymb}{\prime}
\newcommand{\adjunsymb}{\ast} 
\newcommand{\adjun}[1][1]{%
  \setcounter{i}{1}%
  \striche={\adjunsymb}%
  \loop%
  \ifnum\value{i}<#1%
  \striche=\expandafter{\the\expandafter\striche\adjunsymb}%
  \stepcounter{i}%
  \repeat%
  ^{\the\striche}%
}
\newcommand{\dual}[1][1]{
  \setcounter{i}{1}%
  \striche={\dualsymb}%
  \loop%
  \ifnum\value{i}<#1%
  \striche=\expandafter{\the\expandafter\striche\dualsymb}%
  \stepcounter{i}%
  \repeat%
  ^{\the\striche}%
}
\newcommand{\mapping}[4]{%
  \left\{%
    \begin{array}{rcl}%
      #1 &\to & #2, \\
      #3 &\mapsto & #4
    \end{array}%
  \right.%
}
\newcommand{\hamiltonian}{\mathcal{H}}
\newcommand{\idop}{\mathrm{I}}
\newcommand{\opid}{\idop}
\newcommand{\XH}{\mathcal{X}_{\hamiltonian}}
\newcommand{\boundtr}[1][]{\gamma_{0}\ifthenelse{\equal{#1}{}}{}{^{#1}}}
\newcommand{\normaltr}[1][]{\gamma_{\nu}\ifthenelse{\equal{#1}{}}{}{^{#1}}}
\newcommand{\uu}{u}
\newcommand{\vv}{v}
\theoremstyle{plain}
\newtheorem{theorem}{Theorem}[section]
\newtheorem{lemma}[theorem]{Lemma}
\newtheorem{proposition}[theorem]{Proposition}
\newtheorem{corollary}[theorem]{Corollary}
\theoremstyle{definition}
\newtheorem{definition}[theorem]{Definition}
\theoremstyle{remark}
\newtheorem{remark}[theorem]{Remark}
\title[Stability of the wave equation in PHS modelling]{Stability of the multidimensional wave equation in port-Hamiltonian modelling}
\thanks{The authors are working in the ITN network ConFlex. This project is funded by the European Union’s Horizon 2020 research and innovation programme under the Marie Sklodowska-Curie grant agreement No 765579.}
\author[B. Jacob]{Birgit Jacob}
\thanks{Birgit Jacob is with the Fakult\"at f\"ur Mathematik und Naturwissenschaften, IMACM, Bergische Universit\"at Wuppertal, Germany \texttt{\href{mailto:bjacob@uni-wuppertal.de}{bjacob@uni-wuppertal.de}}}
\address{Fakult\"at f\"ur Mathematik und Naturwissenschaften,
Bergische Universit\"at Wuppertal, Germany}
\email[Birgit Jacob]{bjacob@uni-wuppertal.de}
\author[N. Skrepek]{Nathanael Skrepek}
\thanks{Nathanael Skrepek is with the Fakult\"at f\"ur Mathematik und Naturwissenschaften, IMACM,
Bergische Universit\"at Wuppertal, Germany \texttt{\href{mailto:skrepek@uni-wuppertal.de}{skrepek@uni-wuppertal.de}}}
\address{Fakult\"at f\"ur Mathematik und Naturwissenschaften,
Bergische Universit\"at Wuppertal, Germany}
\email[Nathanael Skrepek]{skrepek@uni-wuppertal.de}
\keywords{Wave equation, multidimensional spatial domain, port-Hamiltonian, stability, semi-uniform stability}
\date{\today}
\begin{document}

\maketitle
\thispagestyle{empty}
\pagestyle{empty}

\begin{abstract}
  We investigate the stability of the wave equation with spatial dependent coefficients on a bounded multidimensional domain.
  The system is stabilized via a scattering passive feedback law.
  We formulate the wave equation in a port-Hamiltonian fashion and show that the system is semi-uniform stable, which is a stability concept between exponential stability and strong stability.  Hence, this also implies strong stability of the system. In particular, classical solutions are uniformly stable.
  This will be achieved by showing that the spectrum of the port-Hamiltonian operator is contained in the left half plane $\C_{-}$ and the port-Hamiltonian operator generates a contraction semigroup. Moreover, we show that the spectrum consists of eigenvalues only and the port-Hamiltonian operator has a compact resolvent.
\end{abstract}

\section{Introduction}

In this paper we investigate a stabilizing feedback for the following boundary control system
\begin{subequations}\label{eq:system-classical-formulation}
  \begin{align}
    \begin{aligned}
      u(t,\zeta) &= \frac{\partial w}{\partial T\nu} (t,\zeta),
      &&t\geq 0, \zeta\in \Gamma_{1},\\
      \frac{\partial^{2}w}{\partial t^{2}} (t,\zeta)
      &= \frac{1}{\rho(\zeta)} \div \left(T(\zeta) \grad w(t,\zeta)\right),
      &&t\geq 0, \zeta \in \Omega, \\
      w(t,\zeta)&= h(\zeta),
      &&t\geq 0, \zeta \in \Gamma_{0},  \\[1.2ex]
      w(0,\zeta) &= w_{0}(\zeta),
      &&\phantom{t\geq 0,\mathclose{}} \zeta \in \Omega,  \\
      \frac{\partial w}{\partial t} (0,\zeta) &= w_{1}(\zeta),
      &&\phantom{t\geq 0,\mathclose{}}\zeta \in \Omega, \\
      y(t,\zeta) &= \frac{\partial w}{\partial t} (t,\zeta),
      &&t\geq 0, \zeta\in \Gamma_{1},
    \end{aligned}
  \end{align}
  with feedback law
  \begin{align}\label{eq:system-feedback}
    \begin{aligned}
      \phantom{\frac{\partial^{2}w}{\partial t^{2}} (t,\zeta)}
      \mathllap{u(t,\zeta)}
      &=
      \mathrlap{-k(\zeta) y(t,\zeta),}
      \phantom{\frac{1}{\rho(\zeta)} \div \left(T(\zeta) \grad w(t,\zeta)\right),}
      &&t\geq 0, \zeta\in \Gamma_{1}, \\ 
    \end{aligned}
  \end{align}
\end{subequations} 
where $u$ and $y$ are the boundary control and observation, respectively and $\Omega \subseteq \R^{n}$ is bounded domain with  Lipschitz boundary $\partial\Omega=\overline{\Gamma_0}\cup \overline{\Gamma_1}$ with $\Gamma_0\cap \Gamma_1=\emptyset$, $\Gamma_0$ and $\Gamma_1$ are open in the relative topology of $\partial\Omega$ and the boundaries of $\Gamma_0$ and $\Gamma_1$ have surface measure zero.  Note, that $\Gamma_{0}$ and $\Gamma_{1}$ do not have to be connected. %
Furthermore, $w(\zeta,t)$ is the deflection at point $\zeta \in \Omega$ and $t\ge 0$, and  profile $h$ is given on $\Gamma_{0}$, where the wave is fixed.
Let Young's elasticity modulus
$T\colon \Omega \to \C^{n\times n}$ be a Lipschitz continuous matrix-valued function such that $T(\zeta)$ is a positive and invertible matrix (a.e.) and $T^{-1} \in \Lp{\infty}(\Omega)^{n\times n}$.
The vector $\nu$ denotes the outward normal at the boundary and $\frac{\partial}{\partial T\nu} w(t,\zeta) = T \nu \cdot \grad w(t,\zeta) = \nu \cdot T\grad w(t,\zeta)$ is the conormal derivative. Further, $k\colon \Gamma_{1} \to \R$ is a positive and bounded function such that also its pointwise inverse $k^{-1} = \frac{1}{k}$ is bounded.
Moreover, we have the Lipschitz continuous mass density $\rho \colon \Omega \to \R_{+}$, that satisfies $\rho^{-1} \in \Lp{\infty}(\Omega)$.
Finally, $w_{0}$ and $w_{1}$ are the initial conditions.


Stability of~\eqref{eq:system-classical-formulation} has been studied in the literature by several authors, see e.g.~\cite{bar92,humaloja-kurula-paunonen,lag83,quinn-russell-1977}. Strong stability  has been investigated in~\cite{quinn-russell-1977}. Further, exponential stability of the wave equation with constant $T$ and $\rho$ has been shown in \cite{lag83} using multiplier methods. For smooth domains, in \cite{bar92} the equivalence of exponential stability and the so-called \emph{geometric control condition} was shown by methods from micro-local analysis. 
In~\cite{humaloja-kurula-paunonen} this system also appears in port-Hamiltonian formulation, but with constant $T$ and $\rho$ and $C^{2}$ boundary. Under these restriction it could be shown that this systems is even exponential stable. However, semi-uniform stability, a notion which is stronger than strong stability and weaker than exponential stability, of the multidimensional wave equation with spatial dependent functions $\rho$ and $T$ on quite general domains has not been studied in the literature.

We aim to show semi-uniform stability of the multidimensional wave equation~\eqref{eq:system-classical-formulation} using a port-Hamiltonian formulation. 
Semi-uniform stability implies strong stability, and thus we extend the results obtained in~\cite{quinn-russell-1977}.  To prove our main result we use the fact that semi-uniform stability is satisfied if the port-Hamiltonian operator generates a contraction semigroup and possesses no spectrum in the closed right half plane.
Port-Hamiltonian systems encode the underlying physical principles such as conservation laws directly into the structure of the system structure. For 
finite-dimensional systems there is by now a well-established theory
\cite{vanDerSchaft06,EbMS07,DuinMacc09}. The port-Hamiltonian approach has been further extended to the  
infinite-dimensional situation, see e.g.~\cite{VanDerSchaftMaschke_2002,MM05,JeSc09,ZwaGorMasVil10,Villegas_2007,JacZwa12,kurula-zwart-wave}. 
In~\cite{kurula-zwart-wave} the authors showed that the port-Hamiltonian formulation of the wave equation~\eqref{eq:system-classical-formulation} in $n$ spatial dimensions possess unique mild and classical solutions.

We proceed as follows. In \Cref{sec2} we model the multidimensional wave equation as a port-Hamiltonian system with a suitable state space. The main results concerning stability are then obtained in \Cref{sec3}, where we analyze the spectrum of the differential operator of the port-Hamiltonian formulation. We will see that finding points in the resolvent set is linked to solvability of lossy Helmholtz equations. We will show that our operator has a compact resolvent and its resolvent set contains the imaginary axis. At that point we can apply existing theory to justify semi-uniform stability.
Finally, used notations and results on Sobolev spaces and G{\aa}rdings inequalities are presented in the Appendix.

\section{Port-Hamiltonian formulation of the System}\label{sec2}
In order to find a port-Hamiltonian formulation of our system, that is suitable for our purpose, we split the system~\eqref{eq:system-classical-formulation} into a time independent system for the equilibrium and a dynamical system with homogeneous boundary conditions. The time static system for the equilibrium is given by
\begin{align}\label{eq:equilibrium-system}
  \begin{aligned}
    \div T(\zeta) \grad w_{\mathrm{e}}(\zeta)
    &= 0,
    && \zeta \in \Omega,\\[1.2ex]
    w_{\mathrm{e}}(\zeta) &= h(\zeta), &&\zeta \in \Gamma_{0}, \\
    \frac{\partial w_{\mathrm{e}}}{\partial T \nu} (\zeta) &= 0, &&\zeta \in \Gamma_{1},
  \end{aligned}
\end{align}
and a dynamical system with homogeneous Dirichlet boundary conditions on $\Gamma_{0}$ is given by
\begin{align}\label{eq:dynamical-system}
  \begin{aligned}
    \frac{\partial^{2}w_{\mathrm{d}}}{\partial t^{2}} (t,\zeta)
    &= \frac{1}{\rho(\zeta)} \div (T(\zeta) \grad w_{\mathrm{d}}(t,\zeta)),
    &&t\geq 0, \zeta \in \Omega, \\
    w_{\mathrm{d}}(t,\zeta)&=0,
    &&t\geq 0, \zeta \in \Gamma_{0}, \\
    w_{\mathrm{d}}(0,\zeta) &= w_{0}(\zeta) - w_{\mathrm{e}}(\zeta),
    &&\phantom{t\geq 0,\mathclose{}} \zeta \in \Omega, \\
    \frac{\partial w_{\mathrm{d}}}{\partial t}(0,\zeta) &= w_{1}(\zeta),
    &&\phantom{t\geq 0,\mathclose{}} \zeta \in \Omega, \\
    \frac{\partial w_{\mathrm{d}}}{\partial t} (t,\zeta)
    &= -k\frac{\partial w_{\mathrm{d}}}{\partial T\nu} (t,\zeta),
    &&t\geq 0, \zeta\in \Gamma_{1}.
  \end{aligned}
\end{align}
The original system is solved by $w(t,\zeta) = w_{\mathrm{e}}(t,\zeta) + w_{\mathrm{d}}(\zeta)$.
As in~\cite{kurula-zwart-wave} the system in~\eqref{eq:dynamical-system} can be described in a port-Hamiltonian manner by choosing
the state
$
x(t,\zeta)
=
\left[
  \begin{smallmatrix}
    \rho(\zeta) \frac{\partial}{\partial t} w_{\mathrm{d}}(t,\zeta) \\
    \grad w_{\mathrm{d}}(t,\zeta)
  \end{smallmatrix}
\right]
$. By using the convention 
\begin{equation*}
  \begin{bmatrix}
    x_1(t) \\ x_2(t)
  \end{bmatrix}
  \coloneqq x(t) \coloneqq x(t,\cdot)
\end{equation*}
 we can write the system~\eqref{eq:dynamical-system} as
\begin{align*}
  \frac{\diffd}{\diffd t} x(t)
  &=
  \begin{bmatrix}
    0 & \div \\
    \grad & 0
  \end{bmatrix}
            \begin{bmatrix}
              \frac{1}{\rho} & 0 \\
              0 & T
            \end{bmatrix}
                  x(t),
  \\
  x(0) &=
         \begin{bmatrix}
           \rho w_{1} \\
           \grad (w_{0} - w_{\mathrm{e}})
         \end{bmatrix},
  \\
  \boundtr \tfrac{1}{\rho} x_{1}(t)\big\vert_{\Gamma_{0}} &= 0, \\
  \boundtr \tfrac{1}{\rho} x_{1}(t)\big\vert_{\Gamma_{1}} &= -k \normaltr T x_{2}(t)\big\vert_{\Gamma_{1}}.
\end{align*}
The boundary trace $\boundtr$ and the normal trace $\normaltr$ are explained in the appendix.
Kurula and Zwart~\cite{kurula-zwart-wave} choose the state space $\Lp{2}(\Omega)^{n+1}$ equipped with the energy inner product
\begin{align*}
  \scprod{x}{y}
  \coloneqq \scprod*{x}{
  \begin{bsmallmatrix}
    \frac{1}{\rho} & 0 \\
    0 & T
  \end{bsmallmatrix}
        y
        }_{\Lp{2}(\Omega)^{n+1}},
\end{align*}
which is equivalent to the standard inner product of $\Lp{2}(\Omega)^{n+1}$ thanks to the assumptions on $T$ and $\rho$.
They then show the existence of mild and classical solution via semigroup methods. For well-posedness this is a suitable state space, but when it comes to stability this state space is too large as it does not reflect the fact that the second component of the state variable $x_2$ is of the form $\grad v$, for some function $v$ in the Sobolev space $\Hspace^{1}_{\Gamma_{0}}(\Omega)$. For the precise definition of $\Hspace^{1}_{\Gamma_{0}}(\Omega)$ we refer the reader to the appendix.
Thus, we choose the state space $\XH$ as $\Lp{2}(\Omega) \times \grad \Hspace^{1}_{\Gamma_{0}}(\Omega)$, instead of $\Lp{2}(\Omega)^{n+1}$.
Note that $\grad \Hspace^{1}_{\Gamma_{0}}(\Omega)$ is closed in $\Lp{2}(\Omega)^{n}$ by Poincar{\'e}'s inequality.
Hence, $\XH$ is also a Hilbert space with the $\Lp{2}$-inner product. Nevertheless, we also use the  equivalent energy inner product on $\XH$, that is
\begin{align*}
  \scprod{x}{y}_{\XH}
  \coloneqq \scprod*{x}{
  \begin{bsmallmatrix}
    \frac{1}{\rho} & 0 \\
    0 & T
  \end{bsmallmatrix}
        y
        }_{\Lp{2}(\Omega)^{n+1}}.
\end{align*}
Furthermore, we define
\begin{align*}
  \mathfrak{A}
  &\coloneqq
    \left[
    \begin{smallmatrix}
      0 & \div \\
      \grad & 0
    \end{smallmatrix}
              \right]
              \left[
              \begin{smallmatrix}
                \frac{1}{\rho} & 0 \\
                0 & T
              \end{smallmatrix}
                    \right]
  \\
  \mathllap{\text{with}\quad}\dom(\mathfrak{A})
  &\coloneqq
  \left[
  \begin{smallmatrix}
    \frac{1}{\rho} & 0 \\
    0 & T
  \end{smallmatrix}
        \right]^{-1}
        \big(\Hspace^{1}_{\Gamma_{0}}(\Omega) \times \Hspace(\div,\Omega)\big)
\end{align*}
as densely defined operator on $\Lp{2}(\Omega)^{n+1}$. The definition of $ \Hspace(\div,\Omega)\big)$ is given in the appendix. Note that we have already packed the boundary condition $\boundtr \tfrac{1}{\rho} x_{1} = 0$ on $\Gamma_{0}$ into the domain of $\mathfrak{A}$.
Moreover, by construction $\ran \mathfrak{A} = \XH$.
Taking the state space and the remaining boundary conditions (feedback) into account gives
\begin{align}
  \label{eq:def-A}
  \begin{split}
    A &\coloneqq \mathfrak{A}\big\vert_{\dom(A)}, \\
    \mathllap{\text{where}\quad}
    \dom(A)
    &\coloneqq
    \dset*{x \in \dom (\mathfrak{A})}%
    {\boundtr \tfrac{1}{\rho} x_{1} = -k \normaltr T x_{2} \text{ on } \Gamma_{1}}
    \cap \XH
  \end{split}
\end{align}
as an operator on $\XH$. Note that $\ran A \subseteq \ran \mathfrak{A} = \XH$. Therefore the operator $A$ indeed maps into $\XH$.

The corresponding operator on $\Lp{2}(\Omega)^{n+1}$ would be
\begin{align}
  \label{eq:def-A_0}
  \begin{split}
    A_{0} &\coloneqq \mathfrak{A}\big\vert_{\dom (A_{0})}, \\
    \mathllap{\text{where}\quad}
    \dom(A_{0})
    &\coloneqq
    \dset*{x \in \dom(\mathfrak{A})}%
    {\boundtr \tfrac{1}{\rho} x_{1} = -k \normaltr T x_{2} \text{ on } \Gamma_{1}}.
  \end{split}
\end{align}
By~\cite{kurula-zwart-wave}, $A_{0}$ generates a contraction semigroup on $\Lp{2}(\Omega)^{n+1}$ endowed with
$\scprod{x}{y} \coloneqq \scprod*{x}{\begin{bsmallmatrix} \frac{1}{\rho} & 0 \\ 0 & T\end{bsmallmatrix} y}_{\Lp{2}}$.
Note that this operator allows elements in its domain which do not respect that the second component is a gradient field.
This can lead to solutions that are not related to the original problem anymore, as by construction of the state $x(t,\zeta)$ the second component is $\grad w_{\mathrm{d}}(t,\zeta)$ and therefore a gradient field.
\Cref{th:zero-eigenvalue-of-A0} shows that this is problematic for stability.

We do not need to rebuild the semigroup theory in~\cite{kurula-zwart-wave} for the state space $\XH$. We will see that $A$ inherits most of the properties of $A_{0}$ as $A = A_{0}\big\vert_{\XH}$.

\begin{lemma}\label{le:invariant-subspace}
  Let $(T(t))_{t\ge 0}$ be a strongly continuous semigroup on a Hilbert space $X$ and $\tilde{A}$ its generator. Then
  every subspace $V \supseteq \ran \tilde{A}$ is invariant under $(T(t))_{t\ge 0}$.

  Moreover, $\tilde{A}\big\vert_{V}$ generates the strongly continuous semigroup
  \begin{equation*}
    (T_{V}(t))_{t\ge 0} \coloneqq (T(t)\big\vert_{V})_{t\ge 0},
  \end{equation*}
  if $V$ is additionally closed.
\end{lemma}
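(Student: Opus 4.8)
The plan is to exploit the two fundamental identities relating a strongly continuous semigroup to its generator. I will use that for every $x \in X$ one has $\int_0^t T(s)x \,\diffd s \in \dom(\tilde A)$ together with
\[
  \tilde A \int_0^t T(s) x \,\diffd s = T(t)x - x ,
\]
so that in particular $T(t)x - x \in \ran \tilde A$ for all $x \in X$ and all $t \ge 0$. This is the only nontrivial input; the rest is bookkeeping.

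For the invariance statement I would fix $x \in V$ and note that, since $\ran \tilde A \subseteq V$, the displayed identity gives $T(t)x - x \in \ran\tilde A \subseteq V$; as $V$ is a subspace this yields $T(t)x = x + (T(t)x - x) \in V$. Hence $T(t)V \subseteq V$ for every $t \ge 0$, which is exactly invariance. Closedness of $V$ plays no role here.

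Assuming now in addition that $V$ is closed, so that $V$ is itself a Hilbert space, invariance makes the restrictions $T_V(t) \coloneqq T(t)\big\vert_V$ into maps $V \to V$; the semigroup law, the identity $T_V(0) = \opid$, and strong continuity on $V$ are then all inherited verbatim from $(T(t))_{t\ge 0}$, so that $(T_V(t))_{t\ge 0}$ is a strongly continuous semigroup on $V$. I denote its generator by $B$ and aim to show $B = \tilde A\big\vert_V$.

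The key observation for this last step is that for $x \in V$ the difference quotient $\tfrac{1}{t}(T_V(t)x - x) = \tfrac{1}{t}(T(t)x - x)$ already lies in $V$ (by the same invariance argument), and since $V$ is closed this quotient converges in $V$ as $t \downarrow 0$ if and only if it converges in $X$, with the same limit. Consequently $x \in \dom(B)$ precisely when $x \in V \cap \dom(\tilde A) = \dom(\tilde A\big\vert_V)$, and on this common domain $Bx = \tilde A x$, giving $B = \tilde A\big\vert_V$. I expect this transfer of the generator's defining limit from $X$ to $V$ to be the only delicate point: without closedness of $V$ the difference quotients, though lying in $V$, could a priori converge in $X$ to a limit outside $V$, and the identification of the generator would fail.
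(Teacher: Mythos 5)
Your proposal is correct and follows essentially the same route as the paper: the invariance part uses exactly the same identity $\tilde{A}\int_{0}^{t}T(s)x\dx[s] = T(t)x - x$ together with $\ran\tilde{A}\subseteq V$ and the subspace property. The only difference is that where the paper dispatches the generator identification by citing~\cite[ch.\ II sec.\ 2.3]{engel-nagel}, you prove it directly via the difference-quotient argument (using closedness of $V$ to keep the limit inside $V$), which is precisely the content of that citation and is carried out correctly.
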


\begin{proof}
  Let $t \geq 0$ and $x\in V$. Then it is well-known that
  \begin{align*}
    \underbrace{\tilde{A} \int_{0}^{t} T(s) x \dx[s]}_{\mathrlap{\in \ran \tilde{A}\subseteq V}} = T(t) x - \underbrace{x}_{\mathrlap{\in V}}.
  \end{align*}
  Hence, $T(t)x \in V$, because the left-hand-side is in
  $\ran \tilde{A} \subseteq V$ and $V$ is a subspace.
The remaining assertion follows from~\cite[ch.\ II sec.\ 2.3]{engel-nagel}.
\end{proof}

\begin{remark}
  If the strongly continuous semigroup $(T(t))_{t\ge 0}$ is even a contraction semigroup, then also $(T_{V}(t))_{t\ge 0}$ is a contraction semigroup.
\end{remark}

\begin{proposition}
  The operator $A$ given by~\eqref{eq:def-A} is a generator of contraction semigroup.
\end{proposition}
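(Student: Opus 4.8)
The plan is to deduce everything from the already-established generation property of $A_{0}$ on the larger space $\Lp{2}(\Omega)^{n+1}$ together with \Cref{le:invariant-subspace}. First I would invoke~\cite{kurula-zwart-wave} to record that $A_{0}$ from~\eqref{eq:def-A_0} generates a contraction semigroup $(T_{0}(t))_{t\ge 0}$ on $\Lp{2}(\Omega)^{n+1}$ endowed with the energy inner product.

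The key structural observation is that $\ran A_{0} \subseteq \XH$. Indeed, $A_{0} = \mathfrak{A}\big\vert_{\dom(A_{0})}$, so $\ran A_{0} \subseteq \ran \mathfrak{A} = \XH$ by the construction of $\mathfrak{A}$. Since $\XH = \Lp{2}(\Omega) \times \grad \Hspace^{1}_{\Gamma_{0}}(\Omega)$ is closed in $\Lp{2}(\Omega)^{n+1}$ (by Poincar\'e's inequality, as already noted), the subspace $V = \XH$ meets the hypotheses of \Cref{le:invariant-subspace} with $\tilde{A} = A_{0}$. That lemma then yields that $\XH$ is invariant under $(T_{0}(t))_{t\ge 0}$ and that the restriction $A_{0}\big\vert_{\XH}$ generates the strongly continuous semigroup $(T_{0}(t)\big\vert_{\XH})_{t\ge 0}$ on $\XH$.

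Next I would identify this restriction with $A$. The generator of the restricted semigroup is the part of $A_{0}$ in $\XH$, whose domain is $\{x \in \dom(A_{0}) : x \in \XH\} = \dom(A_{0}) \cap \XH$; because $\ran A_{0} \subseteq \XH$, the condition $A_{0}x \in \XH$ is automatic and imposes no further constraint. Comparing with~\eqref{eq:def-A}, this set is precisely $\dom(A)$, and on it $A$ acts as $\mathfrak{A} = A_{0}$. Hence $A = A_{0}\big\vert_{\XH}$ generates a strongly continuous semigroup on $\XH$. Finally, since $(T_{0}(t))_{t\ge 0}$ is contractive and the energy norm on $\XH$ is the restriction of the energy norm on $\Lp{2}(\Omega)^{n+1}$, the restricted semigroup is again contractive, which is exactly the content of the remark following \Cref{le:invariant-subspace}. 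Therefore $A$ generates a contraction semigroup.

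I do not expect genuine difficulty here; the only point requiring care is the bookkeeping that the domain of the restricted generator coincides with $\dom(A)$, and this hinges precisely on the inclusion $\ran A_{0} \subseteq \XH$, which guarantees that no extra condition survives the passage to the restriction.
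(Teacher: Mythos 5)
Your proof is correct and follows exactly the paper's own route: cite \cite{kurula-zwart-wave} for $A_{0}$ generating a contraction semigroup, observe $\ran A_{0} \subseteq \ran \mathfrak{A} = \XH$, and apply \Cref{le:invariant-subspace} (plus the contraction remark) to the closed subspace $\XH$. Your extra bookkeeping identifying $\dom(A) = \dom(A_{0}) \cap \XH$ with the domain of the restricted generator is a welcome elaboration of a step the paper leaves implicit, but it is not a different argument.
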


\begin{proof}
  By~\cite{kurula-zwart-wave}, $A_{0}$ (defined in~\eqref{eq:def-A_0})
  is a generator of a contraction semigroup $(T_{0}(t))_{t\ge 0}$.
  Because of $\ran A_{0} \subseteq \ran \mathfrak{A} = \XH$ and
  \Cref{le:invariant-subspace}
  $A = A_{0}\big\vert_{\XH}$ generates the contraction semigroup $(T(t))_{t\ge 0} \coloneqq (T_{0}(t)\big\vert_{\XH})_{t\ge 0}$.
\end{proof}

The following lemma is an easy consequence of the integration by parts formula for $\div$-$\grad$ from the appendix and will be useful in the next section.

\begin{lemma}\label{le:boundary-triple-property}
  Let $A$ be given by~\eqref{eq:def-A} and $x,y \in \XH$. Then
  \begin{equation*}
    \scprod{Ax}{y}_{\Lp{2}(\Omega)} + \scprod{x}{Ay}_{\Lp{2}(\Omega)}
    = \scprod{\normaltr T x_{2}}{\boundtr \tfrac{1}{\rho} y_{1}}_{\Lp{2}(\Gamma_{1})}
    + \scprod{\boundtr \tfrac{1}{\rho} x_{1}}{\normaltr T y_{2}}_{\Lp{2}(\Gamma_{1})}.
  \end{equation*}
  And in particular
  \begin{align*}
    \Re \scprod{Ax}{x}_{\Lp{2}(\Omega)}
    = \Re \scprod{\normaltr T x_{2}}{\boundtr \tfrac{1}{\rho} x_{1}}_{\Lp{2}(\Gamma_{1})}.
  \end{align*}
\end{lemma}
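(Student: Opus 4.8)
The plan is to obtain this as the abstract Green identity for the port-Hamiltonian operator, reading it off the $\div$--$\grad$ integration by parts formula from the appendix. I take $x,y\in\dom(A)$, so that $Ax,Ay$ are defined, and I read the pairing on the left as the energy inner product of $\XH$; indeed, for non-constant $T$ this reading is forced, since only $Tx_2$ (and not $x_2$ itself) lies in $\Hspace(\div,\Omega)$, so the plain $\Lp{2}$ product would not even permit an integration by parts. The first point to record is that $x\in\dom(A)\subseteq\dom(\mathfrak{A})$ means precisely $\tfrac1\rho x_1\in\Hspace^{1}_{\Gamma_0}(\Omega)$ and $Tx_2\in\Hspace(\div,\Omega)$ (and likewise for $y$); hence all four boundary quantities $\boundtr\tfrac1\rho x_1$, $\normaltr Tx_2$, $\boundtr\tfrac1\rho y_1$, $\normaltr Ty_2$ are well defined, and the Dirichlet traces $\boundtr\tfrac1\rho x_1,\boundtr\tfrac1\rho y_1$ vanish on $\Gamma_0$ by the definition of $\Hspace^{1}_{\Gamma_0}(\Omega)$. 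This last fact is exactly what will collapse the boundary pairings from $\partial\Omega$ to $\Gamma_1$.

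Next I would expand the left-hand side. Since $Ax=\bigl(\div(Tx_2),\grad(\tfrac1\rho x_1)\bigr)\trans$ and the energy weight is $\operatorname{diag}(\tfrac1\rho,T)$, using that $T(\zeta)$ is pointwise Hermitian and $\rho$ real-valued (which is also what makes the energy product an inner product) to move the weights across one slot, one arrives at
\begin{align*}
  \scprod{Ax}{y}_{\XH}+\scprod{x}{Ay}_{\XH}
  &=\Bigl(\scprod{\div(Tx_2)}{\tfrac1\rho y_1}_{\Lp{2}(\Omega)}+\scprod{Tx_2}{\grad\tfrac1\rho y_1}_{\Lp{2}(\Omega)^{n}}\Bigr)\\
  &\quad+\Bigl(\scprod{\grad\tfrac1\rho x_1}{Ty_2}_{\Lp{2}(\Omega)^{n}}+\scprod{\tfrac1\rho x_1}{\div(Ty_2)}_{\Lp{2}(\Omega)}\Bigr),
\end{align*}
where the terms have been grouped so that each bracket has exactly the shape of the $\div$--$\grad$ Green formula: the first for the vector field $Tx_2$ tested against the scalar $\tfrac1\rho y_1$, the second (after taking a complex conjugate) for the vector field $Ty_2$ tested against $\tfrac1\rho x_1$.

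I would then apply the appendix integration by parts formula to the two brackets, turning them into $\dualprod{\normaltr Tx_2}{\boundtr\tfrac1\rho y_1}_{\partial\Omega}$ and $\overline{\dualprod{\normaltr Ty_2}{\boundtr\tfrac1\rho x_1}_{\partial\Omega}}=\dualprod{\boundtr\tfrac1\rho x_1}{\normaltr Ty_2}_{\partial\Omega}$. Since both Dirichlet traces vanish on $\Gamma_0$, each pairing reduces to $\Gamma_1$, where (in the regularity the appendix provides) it is the stated $\Lp{2}(\Gamma_1)$ inner product; this gives the first identity. The ``in particular'' assertion is then immediate by setting $y=x$: the left side becomes $\scprod{Ax}{x}_{\XH}+\scprod{x}{Ax}_{\XH}=2\Re\scprod{Ax}{x}_{\XH}$ and the right side becomes $\scprod{\normaltr Tx_2}{\boundtr\tfrac1\rho x_1}_{\Lp{2}(\Gamma_1)}+\scprod{\boundtr\tfrac1\rho x_1}{\normaltr Tx_2}_{\Lp{2}(\Gamma_1)}=2\Re\scprod{\normaltr Tx_2}{\boundtr\tfrac1\rho x_1}_{\Lp{2}(\Gamma_1)}$, and dividing by $2$ finishes it.

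I expect the only genuine subtlety to be the boundary analysis on a mere Lipschitz domain: a priori the Dirichlet trace lies in $\Hspace^{1/2}$ and the normal trace in $\Hspace^{-1/2}$, so each boundary term is an $\Hspace^{1/2}$--$\Hspace^{-1/2}$ duality pairing rather than a literal $\Lp{2}(\Gamma_1)$ integral; writing it as an $\Lp{2}(\Gamma_1)$ inner product and localizing the pairing to $\Gamma_1$ through the vanishing of the Dirichlet traces on $\Gamma_0$ is precisely what the appendix's integration by parts formula has to license. Everything else is bookkeeping of complex conjugates together with the pointwise Hermitian symmetry of $T$.
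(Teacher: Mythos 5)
Your proof is correct and is exactly what the paper intends: the paper gives no written proof at all, only the preceding remark that the lemma is ``an easy consequence of the integration by parts formula for $\div$-$\grad$ from the appendix'', and your computation is that consequence carried out in detail (expand in the energy inner product, regroup into the two Green-formula pairs, localize to $\Gamma_{1}$ via the vanishing Dirichlet traces on $\Gamma_{0}$). Your two interpretive corrections are also the right reading of the loosely stated lemma --- one must take $x,y\in\dom(A)$, and the pairings must be the $\XH$ energy products (the plain $\Lp{2}$ reading is false for non-constant $T$, $\rho$) --- with the sole refinement that the $\Lp{2}(\Gamma_{1})$ form of the boundary terms is supplied not by the appendix formula alone but by the feedback condition in $\dom(A)$, which exhibits $\normaltr T x_{2} = -k^{-1}\boundtr \tfrac{1}{\rho}x_{1}$ as the required $\Lp{2}(\Gamma_{1})$ representative.
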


\section{Stability Results}\label{sec3}

In this section we prove semi-uniform stability of the multidimensional wave equation~\eqref{eq:system-classical-formulation}. We start with the definition of semi-uniform stability and strong stability.

\begin{definition}
  We say a strongly continuous semigroup $(T(t))_{t\ge 0}$ on a Hilbert space $X$ is \emph{strongly stable}, if for every $x \in X$
  \begin{align*}
    \lim_{t\to\infty} \norm{T(t)x}_{X} = 0.
  \end{align*}

  We say a continuous semigroup $(T(t))_{t\geq 0}$ on a Hilbert space $X$ is \emph{semi-uniform stable},
  if there exists a continuous  monotone decreasing function
  $f\colon [0,\infty) \to [0,\infty)$ with $\lim_{t\to \infty} f(t)=0$ and
  \[
    \norm{T(t)x}_{X} \leq f(t) \norm{x}_{\dom (A)}
  \]
  for every $x \in \dom(A)$.
\end{definition}

\begin{remark}
  Note that in~\cite[sec.~3]{chill-seifert-tomilov-2020} semi-uniform stability is defined by $\norm{T(t)A^{-1}} \to 0$, where $A$ is the generator of $(T(t))_{t\geq 0}$. It can be easily seen that this is equivalent to our definition.

  Moreover, in~\cite[sec.\ 3]{chill-seifert-tomilov-2020} it is explained that semi-uniform stability is a concept between exponential stability and strong stability. In particular, semi-uniform stability implies strong stability.
\end{remark}


The already mentioned article~\cite{chill-seifert-tomilov-2020} is an overview article on semi-uniform stability.
We remark that this notion is sometimes called differently, e.g.\ in~\cite{su-tucsnak-weiss-2020} it is called uniform stability for smooth data (USSD).

In the following we denote by $A$ the operator given by~\eqref{eq:def-A} which is associated to the port-Hamiltonian formulation of~\eqref{eq:system-classical-formulation}.

Our main result is the following theorem.

\begin{theorem}\label{th:semigroup-semi-uniform-stable}
  The semigroup generated by $A$ is semi-uniform stable.
\end{theorem}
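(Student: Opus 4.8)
The plan is to invoke the abstract criterion recalled in the introduction: a contraction semigroup whose generator has no spectrum in the closed right half plane is semi-uniformly stable, this being the characterization $\norm{T(t)A^{-1}}\to 0$ from \cite{chill-seifert-tomilov-2020}. The contraction property of the semigroup generated by $A$ has already been established, so by the Hille--Yosida bound its generator satisfies $\sigma(A)\subseteq\cl{\C_{-}}$. Consequently the whole argument reduces to showing that the imaginary axis lies in the resolvent set, i.e. $\iu\R\subseteq\rho(A)$, after which the theorem follows at once.

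To obtain $\iu\R\subseteq\rho(A)$ I would first establish that $A$ has compact resolvent. This rests on the Sobolev space results and compact embeddings collected in the appendix: for $x\in\dom(A)$ the first component lies in $\Hspace^{1}(\Omega)$ and embeds compactly into $\Lp{2}(\Omega)$, while the precompactness of the (gradient-type) second component follows from the corresponding div--rot compact embedding. Compactness of the resolvent forces $\sigma(A)$ to be discrete and to consist of eigenvalues of finite multiplicity only, so that $\iu\R\subseteq\rho(A)$ is equivalent to the statement that no purely imaginary number is an eigenvalue of $A$.

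The heart of the proof is therefore the exclusion of imaginary eigenvalues. Given $Ax=\lambda x$ with $x=(x_1,x_2)\in\XH$, the gradient structure of $\XH$ lets me write $x_2=\grad v$ with $v\in\Hspace^{1}_{\Gamma_{0}}(\Omega)$, and the two scalar equations $\div(Tx_2)=\lambda x_1$ and $\grad(\tfrac1\rho x_1)=\lambda x_2$ combine, using $\boundtr\tfrac1\rho x_1=0$ on $\Gamma_{0}$, into the lossy Helmholtz equation $\div(T\grad v)=\lambda^{2}\rho v$. For $\lambda=\iu\omega$ the energy identity \Cref{le:boundary-triple-property} together with the feedback condition $\boundtr\tfrac1\rho x_1=-k\normaltr Tx_2$ gives
\[
  0=\Re\scprod{Ax}{x}_{\Lp{2}(\Omega)}
  =\Re\scprod{\normaltr Tx_2}{\boundtr\tfrac1\rho x_1}_{\Lp{2}(\Gamma_{1})}
  =-\int_{\Gamma_{1}}k\,\abs{\normaltr Tx_2}^{2}\dx[S],
\]
and since $k$ is bounded below by a positive constant this forces $\normaltr T\grad v=0$ on $\Gamma_{1}$. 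The case $\omega=0$ is then disposed of directly: testing $\div(T\grad v)=0$ against $v$ and using positivity of $T$ together with the Dirichlet condition on $\Gamma_{0}$ yields $v=0$, and this is precisely where passing to the smaller state space $\XH$ rather than $\Lp{2}(\Omega)^{n+1}$ is essential. For $\omega\neq 0$ the feedback law additionally forces $\boundtr v=0$ on $\Gamma_{1}$, so $v$ has vanishing Cauchy data on the relatively open set $\Gamma_{1}$.

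The step I expect to be the main obstacle is converting this vanishing Cauchy data into $v\equiv 0$. I would extend $v$ by zero across $\Gamma_{1}$, verify that the extension is a weak solution of the same second-order elliptic equation on an enlarged domain, and then invoke a unique continuation principle for $\div(T\grad\,\cdot\,)+\omega^{2}\rho$. Because $T$ is only Lipschitz and $\Omega$ is merely a Lipschitz domain, this requires a unique continuation result valid for Lipschitz principal coefficients, and checking that the low boundary regularity does not obstruct the zero-extension argument is the delicate point. Granting this, $v\equiv 0$ gives $x=0$, so $A$ has no imaginary eigenvalue; combined with the compact resolvent this yields $\iu\R\subseteq\rho(A)$, and the abstract criterion delivers semi-uniform stability.
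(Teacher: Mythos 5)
Your argument is sound and reaches the theorem by a leaner architecture than the paper's. You establish compactness of the resolvent first (legitimate, since $\C_{+}\subseteq\uprho(A)$ already follows from the contraction property), conclude that the spectrum consists of eigenvalues only, and then exclude every imaginary eigenvalue: $\lambda=0$ via the gradient structure of $\XH$ and positivity of $T$, and $\lambda=\iu\omega\neq 0$ via the energy identity plus unique continuation. The paper splits the axis differently: for $\lambda\in\iu\R\setminus\set{0}$ it proves surjectivity of $A-\lambda$ directly, by showing in \Cref{le:resolvent-set-equivlant-to-helmholtz} that $\lambda\in\uprho(A)$ is equivalent to weak solvability of a lossy Helmholtz problem and then solving that problem in \Cref{th:helmholtz-weakly-solvable} via a G{\aa}rding inequality and the Fredholm alternative (\Cref{th:fredholm-alternative-garding}); the non-degeneracy hypothesis there is precisely your eigenvalue-exclusion argument in disguise ($u\in\Hspace^{1}_{0}(\Omega)$ solving the Helmholtz equation with vanishing conormal trace on $\Gamma_{1}$, then unique continuation). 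Compactness of the resolvent is used in the paper only to dispatch $\lambda=0$ (\Cref{th:compact-resolvent-and-spectrum} together with \Cref{th:zero-no-eigenvalue}). Your route is more economical for the bare stability statement; the paper's route additionally exhibits the explicit link between $\uprho(A)$ and Helmholtz solvability, which it exploits in the conclusion to discuss quantitative decay rates.

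Two steps of your sketch need shoring up. First, the precompactness of the second component does not follow verbatim from an off-the-shelf div--rot compactness theorem: those results assume a normal or tangential boundary condition on all of $\partial\Omega$, whereas here the normal trace of $Tx_{2}$ is controlled only on $\Gamma_{1}$ (through the feedback condition and the trace theorem applied to $\tfrac{1}{\rho}x_{1}$). What you need is exactly the bespoke statement of \Cref{th:compact-embedding-state-space}: fields in $\grad\Hspace^{1}_{\Gamma_{0}}(\Omega)$ with divergence in $\Lp{2}(\Omega)$ and normal trace in $\Lp{2}(\Gamma_{1})$ embed compactly into $\Lp{2}(\Omega)^{n}$; its proof runs through the gradient structure (Poincar\'e, compactness of $\Hspace^{1}(\Omega)\hookrightarrow\Lp{2}(\Omega)$ and of $\Hspace^{\nicefrac{1}{2}}(\partial\Omega)\hookrightarrow\Lp{2}(\partial\Omega)$) and an integration by parts --- this is precisely where the choice of $\XH$ over $\Lp{2}(\Omega)^{n+1}$ pays off, as you anticipated. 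Second, for the unique continuation step the paper avoids your zero-extension construction altogether by citing a boundary unique continuation theorem under zero Neumann boundary conditions (\cite[Theorem~1.7, Remark~1.8]{UCP-TS}), applied to $u\in\Hspace^{1}_{0}(\Omega)$ with $\normaltr T\grad u\big\vert_{\Gamma_{1}}=0$; if you insist on the extension route you must additionally Lipschitz-extend $T$ and $\rho$ across $\Gamma_{1}$ and invoke a weak unique continuation principle valid for Lipschitz principal coefficients, and you should also treat the case $\Gamma_{0}=\emptyset$, where $\tfrac{1}{\rho}x_{1}=\lambda v$ only holds up to an additive constant that has to be absorbed into $v$ before the Cauchy data on $\Gamma_{1}$ vanish.
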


The \hyperref[prf:semigroup-semi-uniform-stable]{proof of \Cref*{th:semigroup-semi-uniform-stable}}
is given at the end of the section.

\begin{remark}
  For the original system~\eqref{eq:system-classical-formulation} strong stability of $A$ translates to: There is a $w_{\mathrm{e}} \in \Hspace^{1}(\Omega)$ such that for every initial value
  $w_{0} \in \Hspace^{1}(\Omega)$,
  $w_{1}\in \Lp{2}(\Omega)$
  the corresponding solution $w$ satisfies
  \begin{align*}
    \lim_{t\to\infty} \norm{w(t,\cdot) - w_{\mathrm{e}}(\cdot)}_{\Hspace^{1}(\Omega)} = 0.
  \end{align*}
\end{remark}

We will make use of a characterization of semi-uniform stability in~\cite[Theorem 3.4]{chill-seifert-tomilov-2020} to show that $A$, given by~\eqref{eq:def-A}, generates a semi-uniform stable semigroup. As $A$ generates a bounded strongly continuous semigroup, by this theorem a sufficient condition for semi-uniform stability is given by $\upsigma(A) \cap \iu \R = \emptyset$.
Here $\upsigma(A)$ denotes the spectrum of the operator $A$.
Hence, it suggests itself to analyse the spectrum of $A$ or its complement in $\mathbb C$, the resolvent set.

We will show that calculating the resolvent set $\uprho(A)$ is related to a lossy Helmholtz problem: Find a function $u\colon \Omega \to \C$ that satisfies
\begin{align}\label{eq:lossy-helmholtz}
  \begin{split}
    \div T \grad \uu - \lambda^{2} \rho \uu &= f
    \quad\text{in}\quad \Omega,
    \\
    \tfrac{\partial}{\partial T\nu} \uu + \lambda k^{-1}  \uu &= \mathrlap{g}\phantom{f}
    \quad\text{on}\quad \Gamma_{1},
  \end{split}
\end{align}
where $\lambda \in \C\setminus\set{0}$, $f \in \Lp{2}(\Omega)$, $g \in \Lp{2}(\Gamma_{1})$, and $k$, $\rho$ and $T$ are the functions from the beginning.
A weak formulation of this problem can be derived by taking the inner product with $\vv \in \Hspace^{1}_{\Gamma_{0}}(\Omega)$, apply an integration by parts formula for $\div$-$\grad$ and taking the boundary conditions into account:
\begin{align}
  \label{eq:weak-helmholtz-formulation}
  \begin{split}
    \MoveEqLeft[16]
    \scprod{T \grad \uu}{\grad \vv}_{\Lp{2}(\Omega)}
    + \lambda^{2} \scprod{\rho \uu}{\vv}_{\Lp{2}(\Omega)}
    + \lambda \scprod{k^{-1}\boundtr \uu}{\boundtr \vv}_{\Lp{2}(\Gamma_{1})}\\
    &= \scprod{-f}{\vv}_{\Lp{2}(\Omega)} + \scprod{g}{\boundtr \vv}_{\Lp{2}(\Gamma_{1})}.
  \end{split}
\end{align}
We define
\begin{align*}
  b(\uu,\vv)
  &\coloneqq
    \scprod{T \grad \uu}{\grad \vv}_{\Lp{2}(\Omega)}
    + \lambda^{2} \scprod{\rho \uu}{\vv}_{\Lp{2}(\Omega)}
    + \lambda \scprod{ k^{-1}\boundtr \uu}{\boundtr \vv}_{\Lp{2}(\Gamma_{1})}
  \\
  F(\vv)
  &\coloneqq
    \scprod{-f}{\vv}_{\Lp{2}(\Omega)} + \scprod{g}{\boundtr \vv}_{\Lp{2}(\Gamma_{1})},
\end{align*}
so that we can write~\eqref{eq:weak-helmholtz-formulation} as
\begin{align}\label{eq:weak-helmholtz-short}
  b(\uu,\vv) = F(\vv).
\end{align}
A \textit{weak solution} of~\eqref{eq:lossy-helmholtz} is a function $\uu \in \Hspace^{1}_{\Gamma_{0}}(\Omega)$ that satisfies~\eqref{eq:weak-helmholtz-short} for every $\vv \in \Hspace^{1}_{\Gamma_{0}}(\Omega)$.

\begin{lemma}\label{le:properties-of-weak-solution}
  Let $\uu$ be a weak solution of the Helmholtz problem~\eqref{eq:lossy-helmholtz}.
  Then $\uu \in \Hspace^{1}_{\Gamma_{0}}(\Omega)$, $T\grad\uu \in \Hspace(\div,\Omega)$ and
  in particular,
  \begin{align*}
    \div T \grad \uu - \lambda^{2} \rho \uu &= f
    \quad\text{in}\quad \Lp{2}(\Omega), \\
    \normaltr T \grad \uu + \lambda k^{-1} \boundtr \uu &= \phantom{f}\mathllap{g}
    \quad\text{in}\quad \Lp{2}(\Gamma_{1}).
  \end{align*}
\end{lemma}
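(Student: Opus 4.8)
The plan is to recover the strong form of~\eqref{eq:lossy-helmholtz} from the weak identity~\eqref{eq:weak-helmholtz-short} in two stages: first the interior equation, by testing against functions vanishing near the whole boundary, and then the boundary condition on $\Gamma_{1}$, by testing against all of $\Hspace^{1}_{\Gamma_{0}}(\Omega)$ and invoking the integration by parts formula for $\div$-$\grad$ from the appendix. That $\uu \in \Hspace^{1}_{\Gamma_{0}}(\Omega)$ is immediate from the definition of a weak solution.

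First I would restrict the test functions to $C_{c}^{\infty}(\Omega) \subseteq \Hspace^{1}_{\Gamma_{0}}(\Omega)$. For such $\vv$ the trace $\boundtr \vv$ vanishes, so both boundary terms in $b(\uu,\vv)$ and $F(\vv)$ drop out and~\eqref{eq:weak-helmholtz-short} reduces to
\[
  \scprod{T\grad\uu}{\grad\vv}_{\Lp{2}(\Omega)} + \lambda^{2} \scprod{\rho\uu}{\vv}_{\Lp{2}(\Omega)} = \scprod{-f}{\vv}_{\Lp{2}(\Omega)}.
\]
Reading the first term as a distributional pairing, this says exactly that $\div T\grad\uu - \lambda^{2} \rho\uu = f$ in the sense of distributions. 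Since $\uu \in \Lp{2}(\Omega)$, $\rho \in \Lp{\infty}(\Omega)$ and $f \in \Lp{2}(\Omega)$, the quantity $\lambda^{2}\rho\uu + f$ lies in $\Lp{2}(\Omega)$; together with $T\grad\uu \in \Lp{2}(\Omega)^{n}$ (because $\uu \in \Hspace^{1}_{\Gamma_{0}}(\Omega)$ and $T \in \Lp{\infty}$) this shows $T\grad\uu \in \Hspace(\div,\Omega)$ and establishes the first claimed identity in $\Lp{2}(\Omega)$.

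With $T\grad\uu \in \Hspace(\div,\Omega)$ secured, its normal trace $\normaltr T\grad\uu$ is well-defined and the integration by parts formula from the appendix applies to arbitrary $\vv \in \Hspace^{1}_{\Gamma_{0}}(\Omega)$, giving
\[
  \scprod{\div T\grad\uu}{\vv}_{\Lp{2}(\Omega)} + \scprod{T\grad\uu}{\grad\vv}_{\Lp{2}(\Omega)} = \scprod{\normaltr T\grad\uu}{\boundtr\vv}_{\Lp{2}(\Gamma_{1})},
\]
where the boundary pairing collapses to $\Gamma_{1}$ because $\boundtr\vv = 0$ on $\Gamma_{0}$. Substituting this into~\eqref{eq:weak-helmholtz-short} and cancelling the volume terms by means of the interior equation just derived leaves
\[
  \scprod{\normaltr T\grad\uu + \lambda k^{-1}\boundtr\uu - g}{\boundtr\vv}_{\Lp{2}(\Gamma_{1})} = 0 \qquad \text{for every } \vv \in \Hspace^{1}_{\Gamma_{0}}(\Omega).
\]

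The final step, which I expect to be the main obstacle, is to upgrade this orthogonality relation to the pointwise identity on $\Gamma_{1}$. The difficulty is that $\normaltr T\grad\uu$ is a priori only a functional on the boundary trace space, so the displayed pairing is really a duality pairing rather than an $\Lp{2}$ inner product. The remedy is to observe that the identity exhibits this functional as $\boundtr\vv \mapsto \scprod{g - \lambda k^{-1}\boundtr\uu}{\boundtr\vv}_{\Lp{2}(\Gamma_{1})}$, whose representative $g - \lambda k^{-1}\boundtr\uu$ does lie in $\Lp{2}(\Gamma_{1})$, since $g \in \Lp{2}(\Gamma_{1})$, the factor $k^{-1}$ is bounded and $\boundtr\uu \in \Lp{2}(\Gamma_{1})$. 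Because the admissible traces $\set{\boundtr\vv\vert_{\Gamma_{1}} : \vv \in \Hspace^{1}_{\Gamma_{0}}(\Omega)}$ are dense in the relevant trace space, this forces $\normaltr T\grad\uu \in \Lp{2}(\Gamma_{1})$ and yields the second claimed identity $\normaltr T\grad\uu + \lambda k^{-1}\boundtr\uu = g$ in $\Lp{2}(\Gamma_{1})$. Care is needed to cite the exact integration by parts formula and trace characterisation from the appendix so that both the duality pairing and the density of traces are on firm footing.
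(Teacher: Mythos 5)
Your proposal is correct and follows essentially the same route as the paper's proof: test first against $\sfC_{\mathrm{c}}^{\infty}(\Omega)$ to obtain $T\grad\uu \in \Hspace(\div,\Omega)$ and the interior equation, then test against all of $\Hspace^{1}_{\Gamma_{0}}(\Omega)$ and integrate by parts to identify the normal trace. Your closing concern about upgrading the duality pairing to an $\Lp{2}(\Gamma_{1})$ identity is already settled by the paper's convention in the appendix, where ``$\normaltr x \in \Lp{2}(\Gamma_{1})$'' is \emph{defined} to mean that the functional is represented by an $\Lp{2}(\Gamma_{1})$ function against all traces $\boundtr y$, $y \in \Hspace^{1}_{\Gamma_{0}}(\Omega)$ --- exactly what your identity exhibits.
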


\begin{proof}
  A weak solution $\uu$ is by definition in $\Hspace^{1}_{\Gamma_{0}}(\Omega)$ and satisfies $b(\uu,\vv) = F(\vv)$ for all $v \in \Hspace^{1}_{\Gamma_{0}}(\Omega)$. If we choose $\vv \in \sfC_{\mathrm{c}}^{\infty}(\Omega)$, then all boundary integrals vanish. Hence,
  \begin{align*}
    \scprod{T\grad \uu}{\grad \vv}_{\Lp{2}(\Omega)}
    = \scprod{-f}{\vv}_{\Lp{2}(\Omega)} - \lambda^{2}\scprod{\rho \uu}{\vv}_{\Lp{2}(\Omega)},
  \end{align*}
  which implies that $T\grad \uu \in \Hspace(\div,\Omega)$ and
  $\div T\grad \uu = f + \lambda^{2}\rho \uu$.
  Using this and choosing again $\vv \in \Hspace^{1}_{\Gamma_{0}}(\Omega)$ in the weak formulation gives
  \begin{equation*}
    \dualprod{\normaltr T \uu}{\boundtr \vv}_{\Hspace^{-\nicefrac{1}{2}}(\Gamma_{1}),\Hspace^{\nicefrac{1}{2}}(\Gamma_{1})}
    + \lambda \scprod{k^{-1} \boundtr \uu}{\boundtr \vv}_{\Lp{2}(\Gamma_{1})}
    = \scprod{g}{\boundtr \vv}_{\Lp{2}(\Gamma_{1})}.
  \end{equation*}
  Therefore, $\normaltr T \uu$ has an $\Lp{2}(\Gamma_{1})$ representative and
  $\normaltr T \uu + \lambda k^{-1} \boundtr \uu = g$.
\end{proof}

Note that for
$y =
\left[
  \begin{smallmatrix}
    y_{1} \\ y_{2}
  \end{smallmatrix}
\right]
\in \XH
$
there exists a $\phi \in \Hspace^{1}_{\Gamma_{0}}(\Omega)$ such that $y_{2} = \grad \phi$.
This $\phi$ continuously depends on $y_{2}$ by Poincar\'{e}'s inequality. If $\Gamma_{0} = \emptyset$, then we choose $\phi \in \quspace{\Hspace^{1}(\Omega)}{\R}$ ($\phi \in \Hspace^{1}(\Omega)$ and $\int_{\Omega} \phi \dx[\uplambda] = 0$) for uniqueness and continuity.

\begin{lemma}\label{le:resolvent-set-equivlant-to-helmholtz}
  Let $A$ be the operator defined in~\eqref{eq:def-A}.
  Then $\lambda \in \uprho(A)\setminus\set{0}$ is equivalent to: The system
  \begin{align}\label{eq:weak-helmholtz-system-for-resolvent-set}
    \begin{split}
      \div T\grad \uu - \lambda^{2} \rho \uu
      &= \lambda y_{1} + \lambda^{2} \rho \phi
      \quad\text{in}\quad \Omega,
      \\
      \tfrac{\partial}{\partial T\nu} \uu +  \lambda k^{-1} \uu
      &=
      \mathrlap{- \lambda k^{-1} \phi}
      \phantom{\lambda y_{1} + \lambda^{2} \rho \phi}
      \quad\text{on}\quad \Gamma_{1},
    \end{split}
  \end{align}
  is weakly solvable for every
  $y =
  \left[
    \begin{smallmatrix}
      y_{1} \\ y_{2}
    \end{smallmatrix}
  \right]
  \in \XH
  $, where $\phi$ is defined by $\grad \phi = y_{2}$ as described above.
\end{lemma}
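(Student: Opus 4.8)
The plan is to translate the resolvent equation $(\lambda - A)x = y$ into the weak Helmholtz problem \eqref{eq:weak-helmholtz-system-for-resolvent-set} by an explicit, invertible change of variables, and then read the equivalence off this correspondence. Since $A$ generates a contraction semigroup it is closed, so for $\lambda\neq 0$ we have $\lambda\in\uprho(A)$ if and only if $\lambda - A\colon\dom(A)\to\XH$ is bijective (boundedness of the inverse is then automatic). I would therefore fix $\lambda\in\C\setminus\{0\}$ and $y=\left[\begin{smallmatrix}y_1\\ y_2\end{smallmatrix}\right]\in\XH$, with $\phi\in\Hspace^1_{\Gamma_0}(\Omega)$ the potential of $y_2=\grad\phi$, and exhibit a bijection between $\{x\in\dom(A):(\lambda - A)x=y\}$ and the set of weak solutions of \eqref{eq:weak-helmholtz-system-for-resolvent-set}.

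Writing $A x=\left[\begin{smallmatrix}\div(T x_2)\\ \grad(\tfrac1\rho x_1)\end{smallmatrix}\right]$, the equation $(\lambda - A)x=y$ splits into $\lambda x_1-\div(T x_2)=y_1$ and $\lambda x_2-\grad(\tfrac1\rho x_1)=\grad\phi$. The second line forces $x_2=\tfrac1\lambda\grad(\tfrac1\rho x_1+\phi)$, which is exactly where $\lambda\neq 0$ enters and which automatically makes $x_2$ a gradient field, hence $x\in\XH$. Setting $u\coloneqq-(\tfrac1\rho x_1+\phi)$, so that $x_2=-\tfrac1\lambda\grad u$ and $x_1=-\rho(u+\phi)$, and substituting into the first line gives $\div T\grad u-\lambda^2\rho u=\lambda y_1+\lambda^2\rho\phi$; likewise the boundary condition $\boundtr\tfrac1\rho x_1=-k\normaltr T x_2$ on $\Gamma_1$ built into $\dom(A)$ rearranges to $\tfrac{\partial}{\partial T\nu}u+\lambda k^{-1}u=-\lambda k^{-1}\phi$ on $\Gamma_1$. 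These are precisely the two lines of \eqref{eq:weak-helmholtz-system-for-resolvent-set}.

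To justify the correspondence in both directions I would lean on the two lemmas already at hand. Given $x\in\dom(A)$ solving the resolvent equation, both $\tfrac1\rho x_1$ and $\phi$ lie in $\Hspace^1_{\Gamma_0}(\Omega)$, so $u\in\Hspace^1_{\Gamma_0}(\Omega)$, and the integration-by-parts formula for $\div$-$\grad$ from the appendix (as used in \Cref{le:boundary-triple-property}) converts the strong identities into $b(u,v)=F(v)$ for all $v\in\Hspace^1_{\Gamma_0}(\Omega)$; thus $u$ is a weak solution. Conversely, starting from a weak solution $u$, \Cref{le:properties-of-weak-solution} upgrades it to $T\grad u\in\Hspace(\div,\Omega)$ with the PDE holding in $\Lp2(\Omega)$ and the boundary identity in $\Lp2(\Gamma_1)$; defining $x_1\coloneqq-\rho(u+\phi)$ and $x_2\coloneqq-\tfrac1\lambda\grad u$ one checks in turn that $\tfrac1\rho x_1\in\Hspace^1_{\Gamma_0}(\Omega)$, that $T x_2\in\Hspace(\div,\Omega)$, that $x\in\XH$, and that the $\Gamma_1$-coupling holds, so $x\in\dom(A)$ and $(\lambda - A)x=y$. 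The two maps are visibly mutual inverses, so they identify the two solution sets.

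Letting $y$ range over $\XH$ then yields the stated equivalence: $\lambda - A$ is bijective exactly when \eqref{eq:weak-helmholtz-system-for-resolvent-set} is (uniquely) weakly solvable for every $y$. I expect the main obstacle to be purely in the bookkeeping of the last direction — verifying membership in $\dom(A)$, and in particular that the reconstructed $x$ lies in $\XH$ with the correct conormal boundary coupling on $\Gamma_1$ — together with keeping all signs consistent between the conormal-derivative condition and the weak form. The only conceptual subtlety is that bijectivity needs injectivity as well as surjectivity: the bijection transports injectivity of $\lambda - A$ to uniqueness of the homogeneous weak problem, so that existence of weak solutions for every $y$ and triviality of the homogeneous solution together characterise $\lambda\in\uprho(A)\setminus\{0\}$.
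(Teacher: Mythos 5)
Your proposal is correct and, at its core, is the same argument as the paper's: the identical change of variables (the paper solves $(A-\lambda)x=y$ and sets $u=\tfrac{1}{\rho}x_{1}-\phi$; you solve $(\lambda-A)x=y$ and set $u=-(\tfrac{1}{\rho}x_{1}+\phi)$, which is the same computation up to replacing $y$ by $-y$), the same verification that the second line of the resolvent equation forces $x_{2}$ to be a gradient field, and the same use of \Cref{le:properties-of-weak-solution} to pass from a weak solution back to an element of $\dom(A)$. Where you genuinely go beyond the paper is the injectivity point. The paper's converse direction literally produces, for each $y$, \emph{some} $x\in\dom(A)$ with $(A-\lambda)x=y$, i.e.\ only surjectivity of $A-\lambda$, and then concludes $\lambda\in\uprho(A)$ without comment; you correctly note that bijectivity also requires injectivity and repair this by transporting injectivity through the correspondence, so that what you actually characterise is \emph{unique} weak solvability for every $y$. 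Strictly speaking this proves a statement with a slightly stronger right-hand side than the lemma as printed, but it is what the solution-set bijection really yields, and it suffices for everything downstream: the G{\aa}rding/non-degeneracy argument of \Cref{th:helmholtz-weakly-solvable} delivers unique solvability, so \Cref{th:i-axis-without-0-resolvent} follows unchanged. If one insists on the lemma's literal formulation (mere solvability for every $y$), the missing implication ``surjective $\Rightarrow$ bijective'' must be supplied separately, e.g.\ for $\Re\lambda\geq 0$ by observing that $A-\lambda$ still generates a contraction semigroup, so $Bx=0$ and $Bz=x$ would give the unbounded orbit $S(t)z=z+tx$ unless $x=0$; or, a posteriori, from the compact resolvent, which makes $A-\lambda$ Fredholm of index zero. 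Your version sidesteps this cleanly, and is in that one respect more careful than the paper.
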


\begin{proof}
  For $\lambda \in \uprho(A)\setminus\set{0}$ and $y \in \XH$ there exists an $x \in \dom (A)$ such that
  $(A - \lambda) x = y$.
  Hence,
  \begin{align*}
    \div T x_{2} - \lambda x_{1} &= y_{1} \\
    \grad \tfrac{1}{\rho} x_{1} - \lambda x_{2} &= \grad \phi
    \quad \Rightarrow \quad
    x_{2} = \tfrac{1}{\lambda} \grad (\tfrac{1}{\rho} x_{1} - \phi).
  \end{align*}
  Substituting $x_{2}$ in the first equation, multiplying by $\lambda$ and adding $\lambda^{2}\rho\phi$ on both sides yields
  \begin{align*}
    \div T \grad (\tfrac{1}{\rho} x_{1} - \phi) - \lambda^{2} \rho (\tfrac{1}{\rho} x_{1} - \phi)
    = \lambda y_{1} + \lambda^{2}\rho\phi.
  \end{align*}
  Since $x\in\dom (A)$ we have
  $k \normaltr T x_{2} + \boundtr \tfrac{1}{\rho} x_{1} = 0$ which becomes
  \begin{align*}
    \normaltr T \grad(\tfrac{1}{\rho} x_{1} - \phi)
    + \lambda k^{-1} \boundtr (\tfrac{1}{\rho}x_{1} - \phi)
    =-\lambda k^{-1} \boundtr \phi.
  \end{align*}
  Hence, $\uu \coloneqq (\tfrac{1}{\rho}x_{1} - \phi)$ is a weak solution of the system~\eqref{eq:weak-helmholtz-system-for-resolvent-set}.
On the other hand if $\uu$ is a weak solution of~\eqref{eq:weak-helmholtz-system-for-resolvent-set},
  then
  $x \coloneqq
  \left[
    \begin{smallmatrix}
      \rho (\uu + \phi) \\
      \frac{1}{\lambda} \grad \uu
    \end{smallmatrix}
  \right]
  \in \dom (A)
  $
  and $(A-\lambda)x = y$ by
  \Cref{le:properties-of-weak-solution}.
\end{proof}

\begin{theorem}\label{th:helmholtz-weakly-solvable}
  For every $\lambda \in \iu \R \setminus \set{0}$ the
  system~\eqref{eq:weak-helmholtz-system-for-resolvent-set}
  is weakly solvable.
\end{theorem}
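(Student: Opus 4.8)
The plan is to fix $\lambda = \iu s$ with $s \in \R \setminus \set{0}$, so that $\lambda^{2} = -s^{2}$ and the sesquilinear form associated with~\eqref{eq:weak-helmholtz-system-for-resolvent-set} becomes
\[
  b(\uu,\vv) = \scprod{T\grad \uu}{\grad \vv}_{\Lp{2}(\Omega)} - s^{2}\scprod{\rho \uu}{\vv}_{\Lp{2}(\Omega)} + \iu s \scprod{k^{-1}\boundtr \uu}{\boundtr \vv}_{\Lp{2}(\Gamma_{1})}
\]
on $\Hspace^{1}_{\Gamma_{0}}(\Omega)$, while the data define a bounded functional $F$. I would first reduce weak solvability to uniqueness by a Fredholm argument. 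Split $b = b_{0} + b_{1}$ with $b_{0}(\uu,\vv) = \scprod{T\grad \uu}{\grad \vv}_{\Lp{2}(\Omega)} + \scprod{\uu}{\vv}_{\Lp{2}(\Omega)}$, which is bounded and coercive on $\Hspace^{1}_{\Gamma_{0}}(\Omega)$ by the uniform positivity of $T$ (this is where Gåardings inequality from the appendix enters), and
\[
  b_{1}(\uu,\vv) = \scprod{(-s^{2}\rho - 1)\uu}{\vv}_{\Lp{2}(\Omega)} + \iu s\scprod{k^{-1}\boundtr \uu}{\boundtr \vv}_{\Lp{2}(\Gamma_{1})}.
\]
By Lax--Milgram the operator $B_{0}$ induced by $b_{0}$ is an isomorphism, whereas the operator $B_{1}$ induced by $b_{1}$ is compact, since both the embedding $\Hspace^{1}_{\Gamma_{0}}(\Omega) \hookrightarrow \Lp{2}(\Omega)$ and the trace $\Hspace^{1}_{\Gamma_{0}}(\Omega) \to \Lp{2}(\Gamma_{1})$ are compact. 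Hence $B = B_{0} + B_{1}$ is Fredholm of index zero, and solvability for every $F$ is equivalent to injectivity of $B$, i.e.\ to showing that the only weak solution of the homogeneous problem $b(\uu,\vv)=0$ for all $\vv$ is $\uu = 0$.

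Next I would establish this uniqueness. Let $\uu$ satisfy $b(\uu,\vv)=0$ for all $\vv \in \Hspace^{1}_{\Gamma_{0}}(\Omega)$ and test with $\vv = \uu$. As $T$ is Hermitian positive and $\rho, k^{-1}$ are real and positive, the first two terms of $b(\uu,\uu)$ are real, so taking imaginary parts yields $s\scprod{k^{-1}\boundtr \uu}{\boundtr \uu}_{\Lp{2}(\Gamma_{1})} = 0$; since $s \neq 0$ and $k^{-1} > 0$ this forces $\boundtr \uu = 0$ on $\Gamma_{1}$, and together with $\uu \in \Hspace^{1}_{\Gamma_{0}}(\Omega)$ we obtain $\uu \in \Hspace^{1}_{0}(\Omega)$. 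By \Cref{le:properties-of-weak-solution}, applied with vanishing data $f = 0$, $g = 0$, we then have $T\grad \uu \in \Hspace(\div,\Omega)$, $\div T\grad \uu = -s^{2}\rho \uu$ in $\Lp{2}(\Omega)$, and $\normaltr T\grad \uu + \iu s\, k^{-1}\boundtr \uu = 0$ on $\Gamma_{1}$; combined with $\boundtr \uu = 0$ on $\Gamma_{1}$ this gives $\normaltr T\grad \uu = 0$ on $\Gamma_{1}$. Thus $\uu$ solves $\div T\grad \uu + s^{2}\rho \uu = 0$ in $\Omega$ and has vanishing Cauchy data (both Dirichlet trace and conormal derivative) on the relatively open, positive-measure set $\Gamma_{1}$.

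It remains to conclude $\uu \equiv 0$ from these overdetermined conditions, and this unique continuation step is the main obstacle. My plan is to fix $\zeta_{0} \in \Gamma_{1}$, extend $T$ and $\rho$ to Lipschitz, respectively bounded, coefficients on a small ball $B$ around $\zeta_{0}$ (keeping ellipticity by continuity after shrinking $B$), and extend $\uu$ by zero to $\tilde{\Omega} = \Omega \cup B$. Because both the trace and the conormal derivative of $\uu$ vanish on $\Gamma_{1}$, an integration-by-parts check shows that the zero-extension $\tilde{\uu}$ is a genuine weak solution of $\div \tilde{T}\grad \tilde{\uu} + s^{2}\tilde{\rho}\,\tilde{\uu} = 0$ on $\tilde{\Omega}$ that vanishes on the nonempty open set $B \setminus \cl{\Omega}$; here the $\Lp{2}$-regularity of $\normaltr T\grad \uu$ provided by \Cref{le:properties-of-weak-solution} is exactly what makes the extension legitimate across the Lipschitz interface $\Gamma_{1}$. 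Since $\Omega$ (and with it a connected $\tilde{\Omega}$) is connected, the unique continuation principle for second-order elliptic operators with merely Lipschitz leading coefficients then forces $\tilde{\uu} \equiv 0$, whence $\uu = 0$ and injectivity follows. The delicate points are precisely the verification that the zero-extension solves the equation up to $\Gamma_{1}$ and the invocation of a unique continuation theorem that is valid for Lipschitz coefficients rather than smooth or analytic ones.
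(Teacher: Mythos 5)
Your proposal is correct and, up to the final step, follows the same route as the paper: both arguments reduce weak solvability to non-degeneracy of $b$ via a Fredholm alternative (the paper packages this as a G{\aa}rding inequality, \Cref{def:garding-inequality}, together with \Cref{th:fredholm-alternative-garding}, taking $K$ to be the compact embedding into $\Lp{2}(\Omega)$ and exploiting that the boundary term has vanishing real part for purely imaginary $\lambda$; you instead split $b$ explicitly into a coercive part plus a compact perturbation, additionally using compactness of the trace into $\Lp{2}(\Gamma_{1})$ --- the two packagings are interchangeable, and indeed the paper's appendix proof of \Cref{th:fredholm-alternative-garding} is exactly your splitting), and both then take imaginary parts of $b(\uu,\uu)=0$ to get $\boundtr\uu=0$ on $\Gamma_{1}$, hence $\uu\in\Hspace^{1}_{0}(\Omega)$, and invoke \Cref{le:properties-of-weak-solution} to obtain $\div T\grad\uu -\lambda^{2}\rho\uu=0$ in $\Lp{2}(\Omega)$ and $\normaltr T\grad\uu=0$ in $\Lp{2}(\Gamma_{1})$. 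Where you genuinely diverge is the unique continuation step: the paper simply cites the boundary unique continuation theorem of Tao and Zhang~\cite{UCP-TS}, which applies directly to a solution with zero Dirichlet trace on $\partial\Omega$ and zero Neumann data on a boundary portion, whereas you reduce to \emph{interior} weak unique continuation by extending $\uu$ by zero across $\Gamma_{1}$ into a small ball $B$ centred at $\zeta_{0}\in\Gamma_{1}$ (shrunk so that $B\cap\partial\Omega\subseteq\Gamma_{1}$, which is possible since $\Gamma_{1}$ is relatively open), extending $T$ and $\rho$ while preserving Lipschitz continuity and ellipticity, and verifying through the integration by parts formula of the appendix that the zero extension is a weak solution on the connected set $\Omega\cup B$ vanishing on the nonempty open set $B\setminus\cl{\Omega}$. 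Your gluing argument is sound precisely for the reasons you identify: $\uu\in\Hspace^{1}_{0}(\Omega)$ makes the zero extension an $\Hspace^{1}$ function, and the $\Lp{2}(\Gamma_{1})$ regularity of $\normaltr T\grad\uu$ from \Cref{le:properties-of-weak-solution} makes the boundary term in the integration by parts vanish for test functions supported in $\Omega\cup B$. What your route buys is independence from the specialized boundary-UCP literature, at the price of the extension construction and of invoking an interior unique continuation theorem valid for merely Lipschitz leading coefficients (Aronszajn--Krzywicki--Szarski or Garofalo--Lin type results, which the Lipschitz assumption on $T$ does satisfy); the paper's citation is shorter but rests on a more tailored boundary result.
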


\begin{proof}
  We set $\lambda = \iu \eta$, where $\eta \in \R \setminus \set{0}$.

  Note that by
  \begin{align*}
    \Re b(\uu, \uu)
    = \norm{T^{\nicefrac{1}{2}} \grad \uu}_{\Lp{2}(\Omega)}^{2}
    - \eta^{2} \norm{\rho^{\nicefrac{1}{2}} \uu}_{\Lp{2}(\Omega)}^{2},
  \end{align*}
  $b(\cdot,\cdot)$ satisfies a G{\aa}rding inequality
  (see \Cref{def:garding-inequality}).

  By G{\aa}rding's inequality
  it is sufficient to show that $b(\cdot,\cdot)$ is a non-de\-ge\-nerated sesquilinear form, (see e.g.\
  \Cref{th:fredholm-alternative-garding}).
  Suppose there is a $\uu \in \Hspace^{1}_{\Gamma_{0}}(\Omega)$ such that $b(\uu,\vv) = 0$ for all $\vv \in \Hspace^{1}_{\Gamma_{0}}(\Omega)$. Then $b(\uu,\uu) = 0$ and by
  separating the imaginary part we have
  \begin{align*}
    \iu \eta k\scprod{\boundtr \uu}{\boundtr \uu}_{\Lp{2}(\Gamma_{1})} &= 0.
  \end{align*}
  Hence, $\uu \in \Hspace^{1}_{0}(\Omega)$. Moreover, $\uu$ is a weak solution of the corresponding system to $b(\uu,\vv)=\tilde{F}(\vv)$, where $\tilde{F}(\vv) \coloneqq 0$.
  By
  \Cref{le:properties-of-weak-solution},
  $\div T \grad \uu + \eta^{2}\rho \uu = 0$ in $\Lp{2}(\Omega)$ and $\normaltr T \uu = 0$ in $\Lp{2}(\Gamma_{1})$.
  Summed up $\uu$ satisfies
  \begin{align*}
    \div T \grad \uu + \eta^{2} \rho \uu &= 0, \\
    \boundtr \uu &= 0, \\
    \normaltr T \uu\big\vert_{\Gamma_{1}} &= 0.
  \end{align*}

  By the unique continuation principle (see e.g.~\cite[Theorem~1.7, Remark~1.8]{UCP-TS}), $\uu$ has to be $0$ and consequently $b(\cdot,\cdot)$ is non-degenerated.
\end{proof}

\begin{remark}
  The system~\eqref{eq:weak-helmholtz-system-for-resolvent-set} is also solvable for $\lambda \in \C_{+}$, but we already knew from the dissipativity of $A$ that $\C_{+} \subseteq \uprho(A)$.
\end{remark}

\begin{corollary}\label{th:i-axis-without-0-resolvent}
  $\iu\R \setminus \set{0} \cup \C_{+}\subseteq \uprho(A)$.
\end{corollary}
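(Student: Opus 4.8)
The plan is to assemble the corollary from the two results immediately preceding it, since at this point every substantial ingredient is already in place. First I would dispose of the punctured imaginary axis $\iu\R\setminus\set{0}$. By \Cref{th:helmholtz-weakly-solvable}, for each $\lambda \in \iu\R\setminus\set{0}$ the system~\eqref{eq:weak-helmholtz-system-for-resolvent-set} is weakly solvable for every $y\in\XH$. Since $\lambda\neq 0$, the equivalence furnished by \Cref{le:resolvent-set-equivlant-to-helmholtz} applies and converts this weak solvability directly into the statement $\lambda\in\uprho(A)$. Running this over all such $\lambda$ yields $\iu\R\setminus\set{0}\subseteq\uprho(A)$.

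Next I would treat the open right half plane $\C_{+}$. Here I would invoke that $A$ generates a contraction semigroup (established above): by the Lumer--Phillips theorem $A$ is $m$-dissipative, so its spectrum is contained in the closed left half plane $\cl{\C_{-}}$, which is exactly $\C_{+}\subseteq\uprho(A)$. Alternatively, as the remark before the corollary points out, one could rerun the argument of \Cref{th:helmholtz-weakly-solvable} for $\lambda\in\C_{+}$ and again apply \Cref{le:resolvent-set-equivlant-to-helmholtz}, but the dissipativity route is cheaper. Taking the union of the two inclusions gives the claim.

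I do not expect a genuine obstacle in this step: the analytic weight—the G{\aa}rding inequality, the Fredholm alternative, and above all the unique continuation argument—has already been absorbed into \Cref{th:helmholtz-weakly-solvable}, while the generation of the contraction semigroup was settled earlier. The only point meriting care is the status of the origin: \Cref{le:resolvent-set-equivlant-to-helmholtz} is formulated only for $\lambda\neq 0$, which is precisely why $0$ must be excised from the imaginary axis in the statement, and the behaviour of $A$ at $\lambda=0$ is deferred to the subsequent spectral analysis rather than claimed here.
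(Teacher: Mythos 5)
Your proposal is correct and matches the paper's own argument: the paper likewise obtains $\iu\R\setminus\set{0}\subseteq\uprho(A)$ by combining \Cref{th:helmholtz-weakly-solvable} with \Cref{le:resolvent-set-equivlant-to-helmholtz}, and handles $\C_{+}$ exactly as you do, via the dissipativity of $A$ (as noted in the remark preceding the corollary). Your added care about excluding $\lambda=0$ is also precisely the paper's reason for deferring the origin to the later spectral analysis.
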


\begin{proof}
  This is a direct consequence of
  \Cref{le:resolvent-set-equivlant-to-helmholtz}
  and
  \Cref{th:helmholtz-weakly-solvable}.
\end{proof}

\begin{lemma}\label{th:imaginary-ev-boundary-condition}
  If $\lambda \in \iu \R$ is an eigenvalue of $A$, then a corresponding
  eigenvector $x$ satisfies $\normaltr T x_{2}\big\vert_{\Gamma_{1}} = \boundtr \tfrac{1}{\rho} x_{1}\big\vert_{\Gamma_{1}} = 0$.
\end{lemma}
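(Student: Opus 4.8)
The plan is to read off the claim from the dissipation identity in \Cref{le:boundary-triple-property} combined with the feedback condition that is built into $\dom(A)$. Let $\lambda = \iu\eta$ with $\eta \in \R$ be an eigenvalue of $A$ and let $x = \left[\begin{smallmatrix} x_1 \\ x_2 \end{smallmatrix}\right] \in \dom(A)$ be a corresponding eigenvector, so that $Ax = \lambda x$ with $\Re\lambda = 0$.

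First I would evaluate $\Re\scprod{Ax}{x}_{\Lp{2}(\Omega)}$ in two ways. On the one hand, since $Ax = \lambda x$, we have $\scprod{Ax}{x}_{\Lp{2}(\Omega)} = \lambda\,\norm{x}_{\Lp{2}(\Omega)}^{2}$, and as $\lambda$ is purely imaginary its real part vanishes, giving $\Re\scprod{Ax}{x}_{\Lp{2}(\Omega)} = 0$. On the other hand, the second (``in particular'') identity in \Cref{le:boundary-triple-property} yields $\Re\scprod{Ax}{x}_{\Lp{2}(\Omega)} = \Re\scprod{\normaltr T x_{2}}{\boundtr \tfrac{1}{\rho} x_{1}}_{\Lp{2}(\Gamma_{1})}$. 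Equating the two expressions forces $\Re\scprod{\normaltr T x_{2}}{\boundtr \tfrac{1}{\rho} x_{1}}_{\Lp{2}(\Gamma_{1})} = 0$.

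Next I would insert the feedback boundary condition encoded in $\dom(A)$, namely $\boundtr \tfrac{1}{\rho} x_{1} = -k\,\normaltr T x_{2}$ on $\Gamma_{1}$. Substituting this into the previous line gives $0 = \Re\scprod{\normaltr T x_{2}}{-k\,\normaltr T x_{2}}_{\Lp{2}(\Gamma_{1})} = -\scprod{k\,\normaltr T x_{2}}{\normaltr T x_{2}}_{\Lp{2}(\Gamma_{1})}$, using that $k$ is real-valued. Since $k$ is positive and, by the boundedness of $k^{-1}$, bounded below by some constant $c > 0$, the right-hand side is at most $-c\,\norm{\normaltr T x_{2}}_{\Lp{2}(\Gamma_{1})}^{2}$, whence $\normaltr T x_{2} = 0$ a.e.\ on $\Gamma_{1}$. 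Feeding this back into the boundary condition immediately gives $\boundtr \tfrac{1}{\rho} x_{1} = -k\cdot 0 = 0$ on $\Gamma_{1}$, which is exactly the assertion.

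I do not expect a genuine obstacle here, as the argument is essentially the observation that a purely imaginary eigenvalue makes the boundary dissipation term vanish. The only point requiring care is the last quantitative step: it is the strict positivity and lower boundedness of $k$ (equivalently, the assumed boundedness of $k^{-1}$) that upgrade the vanishing of $\scprod{k\,\normaltr T x_{2}}{\normaltr T x_{2}}_{\Lp{2}(\Gamma_{1})}$ to the pointwise conclusion $\normaltr T x_{2} = 0$ on $\Gamma_{1}$, and I would make sure to invoke exactly this hypothesis.
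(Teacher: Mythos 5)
Your proof is correct and takes essentially the same route as the paper's: both evaluate $\Re\scprod{Ax}{x}_{\Lp{2}(\Omega)}$ using \Cref{le:boundary-triple-property}, observe it vanishes since $\Re\lambda=0$, and substitute the feedback condition from $\dom(A)$ to turn the boundary pairing into a sign-definite term whose vanishing (via positivity of $k$ and boundedness of $k^{-1}$) kills both traces. The only cosmetic difference is the direction of substitution: you eliminate $\boundtr\tfrac{1}{\rho}x_{1}$ and conclude $\normaltr T x_{2}\big\vert_{\Gamma_{1}}=0$ first, whereas the paper eliminates $\normaltr T x_{2}$ and concludes $\boundtr\tfrac{1}{\rho}x_{1}\big\vert_{\Gamma_{1}}=0$ first.
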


\begin{proof}
  By \Cref{le:boundary-triple-property} we have
  \begin{align*}
    \Re \scprod{(A-\lambda)x}{x}_{\Lp{2}(\Omega)}
    &= \Re \scprod{A x}{x}_{\Lp{2}(\Omega)} - \Re \lambda\scprod{x}{x}_{\Lp{2}(\Omega)} \\
    &= \Re \scprod{\normaltr T x_{2}}{\boundtr \tfrac{1}{\rho} x_{1}}_{\Lp{2}(\Gamma_{1})}
      - \Re \lambda \norm{x}_{\Lp{2}(\Omega)}^{2}\\
    &= -k \Re \scprod{\boundtr \tfrac{1}{\rho} x_{1}}{\boundtr \tfrac{1}{\rho} x_{1}}_{\Lp{2}(\Gamma_{1})}
      - \Re \lambda \norm{x}_{\Lp{2}(\Omega)}^{2}\\
    &= -k \norm{\boundtr \tfrac{1}{\rho} x_{1}}^{2}_{\Lp{2}(\Gamma_{1})} - \Re \lambda \norm{x}_{\Lp{2}(\Omega)}^{2}.
  \intertext{If $x$ is an eigenvector of $\lambda \in \iu \R$, then this equation becomes}
    0 &= -k \norm{\boundtr \tfrac{1}{\rho} x_{1}}^{2}_{\Lp{2}(\Gamma_{1})},
  \end{align*}
  which also gives $\normaltr T x_{2}\big\vert_{\Gamma_{1}} = 0$ by the boundary conditions.
\end{proof}

\begin{lemma}\label{th:zero-no-eigenvalue}
  Let $A: \dom(A) \subseteq \XH \to \XH$ be the operator from the beginning.
  Then $0$ is not an eigenvalue of $A$. 
\end{lemma}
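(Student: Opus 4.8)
The plan is to argue by contradiction: assume $0$ is an eigenvalue, fix an eigenvector $x = \left[\begin{smallmatrix} x_1 \\ x_2 \end{smallmatrix}\right] \in \dom(A)$ with $x \neq 0$ and $Ax = 0$, and deduce $x = 0$. Reading $\mathfrak{A}x = 0$ componentwise, using $\mathfrak{A} = \left[\begin{smallmatrix} 0 & \div \\ \grad & 0 \end{smallmatrix}\right]\left[\begin{smallmatrix} \frac1\rho & 0 \\ 0 & T \end{smallmatrix}\right]$, gives the two scalar identities $\div(T x_2) = 0$ and $\grad(\tfrac1\rho x_1) = 0$ in $\Omega$. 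The second one tells me that $\tfrac1\rho x_1$ is constant on $\Omega$ (which is connected, being a domain), say $\tfrac1\rho x_1 \equiv c$.

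Next I would pin down the boundary behaviour in order to kill this constant. Since $0 \in \iu\R$, \Cref{th:imaginary-ev-boundary-condition} applies directly and yields $\boundtr \tfrac1\rho x_1\big\vert_{\Gamma_1} = 0$ and $\normaltr T x_2\big\vert_{\Gamma_1} = 0$. On $\Gamma_0$ the membership $x \in \dom(\mathfrak{A})$ already forces $\boundtr \tfrac1\rho x_1\big\vert_{\Gamma_0} = 0$. Because $\overline{\Gamma_0} \cup \overline{\Gamma_1} = \partial\Omega$ up to a surface-measure-zero set, the trace of the constant function $c$ vanishes on all of $\partial\Omega$; as the trace of a constant is that constant, we get $c = 0$, hence $x_1 = 0$.

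It remains to show $x_2 = 0$. By definition of $\XH$ there is $\phi \in \Hspace^{1}_{\Gamma_{0}}(\Omega)$ with $x_2 = \grad\phi$ (normalised by $\int_\Omega \phi = 0$ when $\Gamma_0 = \emptyset$). From the first identity $\phi$ solves $\div(T\grad\phi) = 0$, together with $\boundtr \phi\big\vert_{\Gamma_0} = 0$ (membership in $\Hspace^{1}_{\Gamma_{0}}(\Omega)$) and $\normaltr T\grad\phi\big\vert_{\Gamma_1} = 0$ (just established). Testing this equation against $\phi$ and applying the $\div$–$\grad$ integration-by-parts formula from the appendix, the boundary term splits into a contribution over $\Gamma_0$, which vanishes since $\boundtr\phi = 0$ there, and one over $\Gamma_1$, which vanishes since $\normaltr T\grad\phi = 0$ there. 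What survives is $\scprod{T\grad\phi}{\grad\phi}_{\Lp{2}(\Omega)} = 0$, and the positivity together with uniform invertibility of $T$ forces $\grad\phi = 0$, i.e.\ $x_2 = 0$. Combined with $x_1 = 0$ this gives $x = 0$, contradicting that $x$ is an eigenvector, so $0$ is not an eigenvalue.

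The componentwise reading of $\mathfrak{A}$ and the final energy identity are routine; the one step that needs care is annihilating the constant $c$, and in particular the degenerate case $\Gamma_0 = \emptyset$, where there is no Dirichlet constraint on $\Gamma_0$ to do this directly. This is the main obstacle I anticipate, and it is precisely why I route the argument through \Cref{th:imaginary-ev-boundary-condition}: obtaining the vanishing of $\boundtr\tfrac1\rho x_1$ on $\Gamma_1$ makes the trace of $c$ vanish on the whole of $\partial\Omega$, handling both the $\Gamma_0 \neq \emptyset$ and $\Gamma_0 = \emptyset$ cases uniformly.
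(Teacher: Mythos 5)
Your proposal is correct and follows essentially the same path as the paper's proof: the componentwise identities $\div T x_{2} = 0$ and $\grad \tfrac{1}{\rho}x_{1} = 0$, the boundary conditions supplied by \Cref{th:imaginary-ev-boundary-condition}, and the integration-by-parts/positivity-of-$T$ argument to force $x_{2}=0$ (the paper tests $\div Tx_{2}=0$ against arbitrary $f \in \Hspace^{1}_{\Gamma_{0}}(\Omega)$ to get $Tx_{2} \perp \grad \Hspace^{1}_{\Gamma_{0}}(\Omega)$ and then uses $x_{2} = \grad\phi$; you test against $\phi$ directly, which is the same computation). The only divergence is the step $x_{1}=0$: the paper invokes Poincar\'e's inequality on $\Hspace^{1}_{\Gamma_{0}}(\Omega)$, whereas you use constancy of $\tfrac{1}{\rho}x_{1}$ together with the vanishing of its trace on all of $\partial\Omega$ --- a harmless variant that in fact handles the degenerate case $\Gamma_{0}=\emptyset$ (where the plain Poincar\'e inequality is unavailable) more explicitly than the paper does.
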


\begin{proof}
  Let us assume $0$ that is an eigenvalue of $A$ and $x$ be an eigenvector.
  Then $\div T x_{2} = 0$ and
  $\grad \tfrac{1}{\rho}x_{1} = 0$ and
  by
  \Cref{th:imaginary-ev-boundary-condition}
  $x$ satisfies
  $\normaltr T x_{2}\big\vert_{\Gamma_{1}} = 0 = \boundtr \tfrac{1}{\rho} x_{1}\big\vert_{\Gamma_{1}}$.
  Hence, for arbitrary $f\in\Hspace^{1}_{\Gamma_{0}}(\Omega)$ we have
  \begin{align*}
    0 = \scprod{\div T x_{2}}{f}_{\Lp{2}} = - \scprod{T x_{2}}{\grad f}_{\Lp{2}},
  \end{align*}
  which implies $T x_{2} \perp \grad \Hspace^{1}_{\Gamma_{0}}(\Omega)$. Since by assumption
  $T x_{2} \in \grad \Hspace^{1}_{\Gamma_{0}}(\Omega)$ we conclude $x_{2} = 0$.
  Finally, $x_{1} = 0$ by Poincar\'e's inequality.
  Therefore, $0$ cannot not be an eigenvalue.
\end{proof}

\begin{theorem}\label{th:compact-embedding-state-space}
  Let
  \begin{align*}
    X
    &\coloneqq
      \grad \Hspace^{1}_{\Gamma_{0}}(\Omega)
      \cap
      \dset{f \in \Hspace(\div,\Omega)}{\normaltr f\big\vert_{\Gamma_{1}} \in \Lp{2}(\Gamma_{1})}
    \\
    \mathllap{\text{with}\quad}
    \norm{f}_{X}
    &\coloneqq
      \sqrt{%
      \norm{f}_{\Lp{2}(\Omega)^{n}}^{2}
      + \norm{\div f}_{\Lp{2}(\Omega)}^{2}
      + \norm{\normaltr f}_{\Lp{2}(\Gamma_{1})}^{2}%
      }.
  \end{align*}
  Then $X$ can be compactly embedded into $\Lp{2}(\Omega)^{n}$.
\end{theorem}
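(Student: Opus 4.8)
The plan is to prove sequential compactness: every bounded sequence $(f_{m})_{m}$ in $X$ has a subsequence converging in $\Lp{2}(\Omega)^{n}$. Since $f_{m} \in \grad \Hspace^{1}_{\Gamma_{0}}(\Omega)$, I would first write $f_{m} = \grad u_{m}$ with $u_{m} \in \Hspace^{1}_{\Gamma_{0}}(\Omega)$, chosen exactly as in the discussion preceding \Cref{le:resolvent-set-equivlant-to-helmholtz} (and normalized to zero mean in the case $\Gamma_{0} = \emptyset$), so that $u_{m}$ depends continuously on $f_{m}$. Because $\norm{\grad u_{m}}_{\Lp{2}(\Omega)} = \norm{f_{m}}_{\Lp{2}(\Omega)^{n}}$ is bounded, Poincar\'e's inequality makes $(u_{m})_{m}$ bounded in $\Hspace^{1}(\Omega)$. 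Two compactness facts for the bounded Lipschitz domain $\Omega$ then enter: the Rellich--Kondrachov theorem yields a subsequence with $u_{m} \to u$ in $\Lp{2}(\Omega)$, and the compactness of the Dirichlet trace $\boundtr\colon \Hspace^{1}(\Omega) \to \Lp{2}(\partial\Omega)$ (which factors through $\Hspace^{1}(\Omega) \to \Hspace^{\nicefrac{1}{2}}(\partial\Omega) \hookrightarrow \Lp{2}(\partial\Omega)$ with the last embedding compact) yields, after passing to a further subsequence, that $(\boundtr u_{m})_{m}$ is Cauchy in $\Lp{2}(\Gamma_{1})$.

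The heart of the argument is to upgrade convergence of the $u_{m}$ and of their traces to convergence of the gradients. Setting $w \coloneqq u_{m} - u_{l}$, so that $\grad w = f_{m} - f_{l} \in \Hspace(\div,\Omega)$, $\div \grad w = \div(f_{m}-f_{l})$, and $\boundtr w = 0$ on $\Gamma_{0}$, I would apply the integration by parts formula for $\div$--$\grad$ from the appendix to get
\begin{align*}
  \norm{\grad w}_{\Lp{2}(\Omega)}^{2}
  &= \scprod{\grad w}{\grad w}_{\Lp{2}(\Omega)} \\
  &= -\scprod{\div(f_{m}-f_{l})}{w}_{\Lp{2}(\Omega)}
  + \scprod{\normaltr(f_{m}-f_{l})}{\boundtr w}_{\Lp{2}(\Gamma_{1})}.
\end{align*}
The boundary pairing collapses to an $\Lp{2}(\Gamma_{1})$ inner product precisely because $\boundtr w$ is supported in $\overline{\Gamma_{1}}$ and $\normaltr(f_{m}-f_{l})\big\vert_{\Gamma_{1}} \in \Lp{2}(\Gamma_{1})$ by membership in $X$. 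Estimating both terms by Cauchy--Schwarz and using that $\norm{\div f_{m}}_{\Lp{2}(\Omega)}$ and $\norm{\normaltr f_{m}}_{\Lp{2}(\Gamma_{1})}$ are uniformly bounded gives
\begin{align*}
  \norm{f_{m}-f_{l}}_{\Lp{2}(\Omega)^{n}}^{2}
  \leq C\big(\norm{u_{m}-u_{l}}_{\Lp{2}(\Omega)} + \norm{\boundtr(u_{m}-u_{l})}_{\Lp{2}(\Gamma_{1})}\big)
\end{align*}
with $C$ independent of $m,l$.

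Along the common subsequence extracted in the first step, both terms on the right tend to $0$ as $m,l \to \infty$, so $(f_{m})_{m}$ is Cauchy in $\Lp{2}(\Omega)^{n}$ and hence convergent; this is the asserted compact embedding. The Poincar\'e and Cauchy--Schwarz estimates are routine. I expect the main obstacle to be the careful justification of the boundary term: one must verify that the $\Hspace^{-\nicefrac{1}{2}}$--$\Hspace^{\nicefrac{1}{2}}$ duality pairing appearing in the general $\div$--$\grad$ integration by parts formula genuinely reduces to the $\Lp{2}(\Gamma_{1})$ inner product written above, which relies both on the homogeneous Dirichlet condition on $\Gamma_{0}$ encoded in $\Hspace^{1}_{\Gamma_{0}}(\Omega)$ and on the extra $\Lp{2}(\Gamma_{1})$-regularity of the normal trace built into the definition of $X$. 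A secondary point to check is the degenerate case $\Gamma_{0} = \emptyset$, where one normalizes $u_{m}$ by zero mean and invokes the Poincar\'e--Wirtinger inequality in place of the standard Poincar\'e inequality.
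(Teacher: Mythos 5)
Your proposal is correct and follows essentially the same route as the paper's proof: write $f_{m} = \grad u_{m}$ with $u_{m} \in \Hspace^{1}_{\Gamma_{0}}(\Omega)$, use Poincar\'e plus the compact embeddings $\Hspace^{1}(\Omega) \hookrightarrow \Lp{2}(\Omega)$ and $\Hspace^{\nicefrac{1}{2}}(\partial\Omega) \hookrightarrow \Lp{2}(\partial\Omega)$ to extract a subsequence along which $u_{m}$ and $\boundtr u_{m}$ converge, and then upgrade to convergence of the gradients via the $\div$--$\grad$ integration by parts identity and the uniform bounds on $\div f_{m}$ and $\normaltr f_{m}$. Your explicit treatment of the boundary-pairing reduction to $\Lp{2}(\Gamma_{1})$ and of the degenerate case $\Gamma_{0} = \emptyset$ (zero-mean normalization) only makes explicit points the paper leaves implicit.
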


\begin{proof}
  Let $(f_{n})_{n\in\N}$ be a bounded sequence in $X$, i.e.\ $\sup_{n\in\N}\norm{f_{n}}_{X} \leq K \in \R$.
  By assumption there exists a $\phi_{n} \in \Hspace^{1}_{\Gamma_{0}}(\Omega)$ such that $f_{n} = \grad \phi_{n}$
  for every $n\in\N$.
  By Poincar\'{e}'s inequality we have
  \begin{align*}
    \norm{\phi_{n}}_{\Hspace^{1}(\Omega)} \leq C \norm{\grad \phi_{n}}_{\Lp{2}(\Omega)} \leq C \norm{f_{n}}_{X}.
  \end{align*}
  Hence, $(\phi_{n})_{n\in\N}$ is a bounded sequence in $\Hspace^{1}(\Omega)$.
  Moreover, $(\boundtr \phi_{n})_{n\in\N}$ is a bounded sequence in $\Hspace^{\nicefrac{1}{2}}(\partial\Omega)$.
  By the compact embedding of $\Hspace^{1}(\Omega)$ into $\Lp{2}(\Omega)$ and $\Hspace^{\nicefrac{1}{2}}(\partial\Omega)$ into $\Lp{2}(\partial\Omega)$, there exists a subsequence $(\phi_{n(k)})_{k\in\N}$ that converges in $\Lp{2}(\Omega)$ such that also $(\boundtr \phi_{n(k)})_{k\in\N}$ converges in $\Lp{2}(\partial\Omega)$. W.l.o.g.\ we assume that this is already true for the original sequence. By
  \begin{align*}
    \MoveEqLeft\norm{f_{n}-f_{m}}_{\Lp{2}(\Omega)}^{2} \\
    &= \scprod{f_{n}-f_{m}}{\grad(\phi_{n}-\phi_{m})}_{\Lp{2}(\Omega)} \\
    &= -\scprod{\div(f_{n}-f_{m})}{\phi_{n}-\phi_{m}}_{\Lp{2}(\Omega)}
       + \underbrace{\scprod{\normaltr (f_{n}-f_{m})}{\boundtr(\phi_{n}-\phi_{m})}_{\Lp{2}(\partial\Omega)}}_{\scprod{\normaltr (f_{n}-f_{m})}{\boundtr(\phi_{n}-\phi_{m})}_{\Lp{2}(\Gamma_{1})}}
    \\
    &\leq 2K\norm{\phi_{n}-\phi_{m}}_{\Lp{2}(\Omega)} + 2K \norm{\boundtr\phi_{n}-\boundtr\phi_{m}}_{\Lp{2}(\Gamma_{1})} \\
    &\to 0,
  \end{align*}
  we have that $(f_{n})_{n\in\N}$ is a Cauchy sequence in $\Lp{2}(\Omega)^{n}$ and therefore convergent.
\end{proof}

\begin{theorem}\label{le:dom-A-compact-embedding}
  $\dom (A)$ can be compactly embedded into $\XH$.
\end{theorem}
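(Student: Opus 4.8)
The plan is to show that every sequence $(x^{(k)})_{k\in\N}$ bounded in the graph norm of $\dom(A)$ admits a subsequence converging in $\XH$, treating the two components $x^{(k)}=(x_{1}^{(k)},x_{2}^{(k)})$ separately. Abbreviating $v^{(k)}\coloneqq\frac1\rho x_{1}^{(k)}$, $w^{(k)}\coloneqq T x_{2}^{(k)}$, and writing $\phi^{(k)}\in\Hspace^{1}_{\Gamma_{0}}(\Omega)$ for the potential with $\grad\phi^{(k)}=x_{2}^{(k)}$, the bound in $\dom(A)$ and the definition of $A$ provide uniform bounds: $x_{1}^{(k)},x_{2}^{(k)}$ in $\Lp{2}(\Omega)$ (from the $\XH$-part of the norm), $\grad v^{(k)}$ and $\div w^{(k)}=\div T x_{2}^{(k)}$ in $\Lp{2}(\Omega)$ (these are the two components of $Ax^{(k)}$), and — via the boundary condition $\normaltr T x_{2}^{(k)}=-k^{-1}\boundtr\frac1\rho x_{1}^{(k)}$ on $\Gamma_{1}$ together with the boundedness of $k^{-1}$ and the continuity of the trace $\Hspace^{1}\to\Hspace^{\nicefrac12}\hookrightarrow\Lp{2}$ — a bound on $\normaltr w^{(k)}$ in $\Lp{2}(\Gamma_{1})$.

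For the first component I would observe that $v^{(k)}$ is bounded in $\Hspace^{1}(\Omega)$: its $\Lp{2}$-norm is controlled through $x_{1}^{(k)}$ and $\frac1\rho\in\Lp{\infty}(\Omega)$, and its gradient is controlled directly. The compact embedding $\Hspace^{1}(\Omega)\hookrightarrow\hookrightarrow\Lp{2}(\Omega)$ then yields a subsequence with $v^{(k)}\to v$ in $\Lp{2}(\Omega)$, and since multiplication by $\rho\in\Lp{\infty}(\Omega)$ is continuous on $\Lp{2}$, we get $x_{1}^{(k)}=\rho v^{(k)}\to\rho v$ in $\Lp{2}(\Omega)$.

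The second component is the main obstacle, and here \Cref{th:compact-embedding-state-space} cannot be applied directly: the regularity supplied by $\dom(\mathfrak{A})$ pertains to $w^{(k)}=T x_{2}^{(k)}$, not to $x_{2}^{(k)}$; but $T x_{2}^{(k)}$ is generally not a gradient field, so it does not belong to the space $X$ of that theorem, while $x_{2}^{(k)}$ is a gradient field whose divergence need not lie in $\Lp{2}(\Omega)$ (that would require elliptic regularity, which fails on Lipschitz domains). The remedy is to rerun the Cauchy-sequence argument of \Cref{th:compact-embedding-state-space} with the weight $T$ inserted. After passing to a further subsequence so that $\phi^{(k)}$ converges in $\Lp{2}(\Omega)$ and $\boundtr\phi^{(k)}$ converges in $\Lp{2}(\partial\Omega)$ — using that $\phi^{(k)}$ depends continuously on $\grad\phi^{(k)}=x_{2}^{(k)}$ by Poincar\'e's inequality, together with Rellich and the compact trace embedding — the uniform positive definiteness of $T$, i.e.\ $\scprod{T\xi}{\xi}\ge\norm{T^{-1}}_{\Lp{\infty}}^{-1}\abs{\xi}^{2}$ pointwise a.e., and the integration by parts formula for $\div$-$\grad$ give
\begin{align*}
  \norm{T^{-1}}_{\Lp{\infty}}^{-1}\norm{x_{2}^{(k)}-x_{2}^{(m)}}_{\Lp{2}(\Omega)}^{2}
  &\le \scprod{T(x_{2}^{(k)}-x_{2}^{(m)})}{x_{2}^{(k)}-x_{2}^{(m)}}_{\Lp{2}(\Omega)} \\
  &= -\scprod{\div(w^{(k)}-w^{(m)})}{\phi^{(k)}-\phi^{(m)}}_{\Lp{2}(\Omega)} \\
  &\quad + \scprod{\normaltr(w^{(k)}-w^{(m)})}{\boundtr(\phi^{(k)}-\phi^{(m)})}_{\Lp{2}(\Gamma_{1})}.
\end{align*}
Both terms on the right tend to $0$, since $\div w^{(k)}$ and $\normaltr w^{(k)}$ are bounded while $\phi^{(k)}$ and $\boundtr\phi^{(k)}$ converge. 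Hence $(x_{2}^{(k)})$ is Cauchy, and thus convergent, in $\Lp{2}(\Omega)^{n}$. Combining both components, the extracted subsequence of $(x^{(k)})$ converges in $\XH$, which establishes that $\dom(A)$ embeds compactly into $\XH$.
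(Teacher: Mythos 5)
Your proof is correct, and the comparison with the paper is worth spelling out, because you have not merely rederived the paper's argument: you have repaired it. The paper handles the second component by factoring through \Cref{th:compact-embedding-state-space}: it asserts that $\Phi(x) = \begin{bsmallmatrix}\frac{1}{\rho} & 0\\ 0 & T\end{bsmallmatrix}x$ maps $\dom(A)$ continuously into $\Hspace^{1}_{\Gamma_{0}}(\Omega)\times X$ and then composes with the compact embeddings of those two spaces. As you observe, this requires $Tx_{2}\in\grad\Hspace^{1}_{\Gamma_{0}}(\Omega)$, which is not available: for $x\in\XH$ only $x_{2}$ itself is a gradient, and for a non-constant matrix-valued $T$ the field $Tx_{2}=T\grad\phi$ is in general not curl-free, hence does not lie in the space $X$; conversely, $x_{2}$ is a gradient but need not lie in $\Hspace(\div,\Omega)$. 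So \Cref{th:compact-embedding-state-space} cannot be invoked as a black box, exactly as you say, and the paper's map $\Phi$ does not obviously land where it is claimed to. Your remedy --- rerunning the Cauchy-sequence argument of that theorem with the weight inserted, pairing $w^{(k)}=Tx_{2}^{(k)}$ (which carries the $\Hspace(\div,\Omega)$ regularity and, via the feedback boundary condition and the trace theorem, an $\Lp{2}(\Gamma_{1})$ bound on $\normaltr w^{(k)}$) against $\grad\phi^{(k)}=x_{2}^{(k)}$ (which is the gradient), and using the uniform positive definiteness of $T$ to recover the $\Lp{2}$-norm --- is exactly the right fix: in effect you prove the $T$-weighted generalization of \Cref{th:compact-embedding-state-space} that the paper actually needs, while the unweighted statement suffices only when multiplication by $T$ preserves gradient fields (e.g.\ constant scalar $T$). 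Everything else in your argument (Rellich for the first component, Poincar\'{e} plus the compact trace embedding for the potentials, the $\div$--$\grad$ integration-by-parts identity) coincides with the mechanism underlying the paper's proofs. Two minor caveats, both shared with the paper's own proofs: if $\Gamma_{0}=\emptyset$ the potentials $\phi^{(k)}$ must be normalized (zero mean) for Poincar\'{e}'s inequality to apply, and at the very end one should remark that $\grad\Hspace^{1}_{\Gamma_{0}}(\Omega)$ is closed in $\Lp{2}(\Omega)^{n}$, so that the limit of $(x_{2}^{(k)})_{k}$ indeed stays in $\XH$.
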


\begin{proof}
  Note that $\dom (A) \subseteq \XH$ and that $\norm{.}_{\XH}$ is equivalent to $\norm{.}_{\Lp{2}(\Omega)^{n+1}}$.
  We regard $\dom (A)$ with
  $\scprod{x}{y}_{A} = \scprod{x}{y}_{\XH} + \scprod{Ax}{Ay}_{\XH}$
  as inner product. Note that $\dom (A)$ is a Hilbert space with the previous inner product.
  The induced norm can be written as
  \begin{align*}
    \norm{x}_{A} =
    \sqrt{\norm{x}_{\XH}^{2}
    + \norm{T \grad \tfrac{1}{\rho} x_{1}}_{\Lp{2}}^{2}
    + \norm{\tfrac{1}{\rho}\div T x_{2}}_{\Lp{2}}^{2}}.
  \end{align*}
  Note that $\norm{\normaltr T x_{2}}_{\Lp{2}(\Gamma_{1})}$ is automatically bounded by $C\norm{x}_{A}$ for some $C>0$,
  since $\norm{\boundtr \tfrac{1}{\rho} x_{1}}_{\Hspace^{\nicefrac{1}{2}}(\partial\Omega)}$ is bounded by $C\norm{x}_{A}$ for some $C>0$
  and $\normaltr T x_{2}\big\vert_{\Gamma_{1}} = \tfrac{-1}{k}\boundtr \tfrac{1}{\rho} x_{1}\big\vert_{\Gamma_{1}}$.
  Let $X$ be the space from
  \Cref{th:compact-embedding-state-space}.
  Then
  \begin{align*}
    \Phi\colon \mapping{\dom (A)}{\Hspace^{1}_{\Gamma_{0}}(\Omega) \times X}{x}
    {\begin{bsmallmatrix}
        \frac{1}{\rho} & 0 \\
        0 & T
      \end{bsmallmatrix}x,}
  \end{align*}
  is continuous. Moreover, both $\Hspace^{1}_{\Gamma_{0}}(\Omega)$ and $X$ can be compactly embedded into $\Lp{2}(\Omega)$ and $\Lp{2}(\Omega)^{n}$, respectively. We denote this combined compact embeddeding by $\iota\colon \Hspace^{1}_{\Gamma_{0}}(\Omega) \times X \to \Lp{2}(\Omega)^{n+1}$. Hence, also $\dom (A)$ can be compactly embedded into $\XH$ by $\Phi^{-1}\iota\Phi$.
\end{proof}

\begin{corollary}\label{th:compact-resolvent-and-spectrum}
  The resolvent operators of $A$ are compact,   the spectrum of $A$ contains only eigenvalues and $\iu \R \cup \C_{+} \subseteq \uprho(A)$.
\end{corollary}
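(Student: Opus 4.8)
The plan is to derive all three assertions from the compactness of a single resolvent operator, combined with the facts already established.

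First I would establish that some resolvent operator is compact. By \Cref{th:i-axis-without-0-resolvent} the resolvent set is nonempty, so fix any $\lambda_{0} \in \uprho(A)$, say $\lambda_{0}=1 \in \C_{+}$. The resolvent $(A-\lambda_{0})^{-1}$ maps $\XH$ boundedly into $\dom(A)$ equipped with the graph norm $\norm{\cdot}_{A}$ used in \Cref{le:dom-A-compact-embedding}: for $y \in \XH$ and $x \coloneqq (A-\lambda_{0})^{-1}y$ one has $Ax = y + \lambda_{0} x$, so that $\norm{x}_{A}^{2} = \norm{x}_{\XH}^{2} + \norm{Ax}_{\XH}^{2}$ is controlled by $\norm{x}_{\XH}$ and $\norm{y}_{\XH}$, both bounded by a constant times $\norm{y}_{\XH}$ via boundedness of $(A-\lambda_{0})^{-1}$ on $\XH$. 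Factoring $(A-\lambda_{0})^{-1}\colon \XH \to \XH$ as the bounded map $\XH \to (\dom(A),\norm{\cdot}_{A})$ followed by the compact embedding from \Cref{le:dom-A-compact-embedding}, we conclude that $(A-\lambda_{0})^{-1}$ is compact. By the resolvent identity, compactness at one point of $\uprho(A)$ propagates to every point, so all resolvent operators of $A$ are compact.

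Second, the statement that the spectrum consists only of eigenvalues follows from the Riesz--Schauder theory of compact operators. For $\lambda \neq \lambda_{0}$ one has $\lambda \in \upsigma(A)$ if and only if $(\lambda_{0}-\lambda)^{-1}$ is a nonzero spectral value of the compact operator $(A-\lambda_{0})^{-1}$; since the nonzero spectrum of a compact operator consists solely of eigenvalues, and an eigenvector of $(A-\lambda_{0})^{-1}$ at $(\lambda_{0}-\lambda)^{-1}$ is an eigenvector of $A$ at $\lambda$, it follows that $\upsigma(A)$ contains only eigenvalues.

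Finally, I would upgrade the inclusion of \Cref{th:i-axis-without-0-resolvent} to include the origin. That result already yields $\iu\R\setminus\set{0} \cup \C_{+} \subseteq \uprho(A)$, so it remains only to show $0 \in \uprho(A)$. Since $\upsigma(A)$ consists only of eigenvalues and $0$ is not an eigenvalue by \Cref{th:zero-no-eigenvalue}, we obtain $0 \notin \upsigma(A)$, that is $0 \in \uprho(A)$; together this gives $\iu\R \cup \C_{+} \subseteq \uprho(A)$. The main obstacle here is conceptual rather than computational: the Helmholtz reformulation behind \Cref{th:i-axis-without-0-resolvent} degenerates at $\lambda=0$, and dissipativity alone does not exclude $0$ from the continuous spectrum. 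The crux is the interplay of the two facts---compactness of the resolvent forces the spectrum to be pure point spectrum, and only with this in hand does the absence of $0$ as an eigenvalue suffice to place $0$ in the resolvent set.
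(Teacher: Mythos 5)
Your proof is correct and takes essentially the same route as the paper: compactness of the resolvent via the compact embedding of $\dom(A)$ into $\XH$ from \Cref{le:dom-A-compact-embedding}, Riesz--Schauder theory to conclude the spectrum is pure point, and then \Cref{th:zero-no-eigenvalue} together with \Cref{th:i-axis-without-0-resolvent} to obtain $\iu\R \cup \C_{+} \subseteq \uprho(A)$; you merely spell out the details (boundedness of the resolvent into the graph-norm space, propagation via the resolvent identity) that the paper leaves implicit. One trivial slip: the eigenvalue of $(A-\lambda_{0})^{-1}$ corresponding to $\lambda \in \upsigma(A)$ is $(\lambda-\lambda_{0})^{-1}$, not $(\lambda_{0}-\lambda)^{-1}$.
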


\begin{proof}
  By
  \Cref{le:dom-A-compact-embedding},
  $\dom (A)$ can be compactly embedded into $\XH$, which implies that every resolvent operator is compact. Hence, the spectrum of $A$ contains only eigenvalues.
  Since $0$ is not an eigenvalue by
  \Cref{th:zero-no-eigenvalue},
  we conclude that $0 \in \uprho(A)$.
  Moreover, by
  \Cref{th:i-axis-without-0-resolvent}
  also every other point on $\iu\R$ is in $\uprho(A)$.
\end{proof}

Finally we will prove
\Cref{th:semigroup-semi-uniform-stable}.

\begin{proof}[Proof of \Cref{th:semigroup-semi-uniform-stable}]%
  \label{prf:semigroup-semi-uniform-stable}
  By
  \Cref{th:compact-resolvent-and-spectrum}
  we have $\upsigma(A) \cap \iu \R = \emptyset$.
  Therefore, as announced in the beginning, \cite[Theorem~3.4]{chill-seifert-tomilov-2020} implies the semi-uniform stability of the semigroup generated by $A$.
\end{proof}

We conclude this section with an investigation of the strong stability of the operator $A_0$ given by~\eqref{eq:def-A_0}, which is an extension of $A$ and generates a strongly continuous semigroup on $\Lp{2}(\Omega)^{n+1}$.

\begin{lemma}\label{th:zero-eigenvalue-of-A0}
  Let $\Omega \subseteq \R^{n}$ be bounded and open with Lipschitz boundary,
  $n\geq 2$.
  Then the operator $A_{0}$ (defined in~\eqref{eq:def-A_0})
  has $\lambda = 0$ as an eigenvalue and thus, does not generate a strongly stable semigroup.
\end{lemma}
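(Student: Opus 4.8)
The plan is to exploit that $A_{0}$ acts on the full space $\Lp{2}(\Omega)^{n+1}$, where---in contrast to the state space $\XH$---the second component of an element of $\dom(A_{0})$ need not be a gradient field. For $x = \begin{bsmallmatrix} x_{1} \\ x_{2} \end{bsmallmatrix} \in \dom(A_{0})$ one has $A_{0}x = \begin{bsmallmatrix} \div T x_{2} \\ \grad \frac{1}{\rho} x_{1} \end{bsmallmatrix}$, so the eigenvalue equation $A_{0}x = 0$ is equivalent to $\grad \tfrac{1}{\rho} x_{1} = 0$ together with $\div T x_{2} = 0$. I would look for an eigenvector with $x_{1} = 0$; then the boundary condition defining $\dom(A_{0})$ collapses to $\normaltr T x_{2}\big\vert_{\Gamma_{1}} = 0$, and the whole problem reduces to producing a nonzero $x_{2}$ with $T x_{2} \in \Hspace(\div,\Omega)$, $\div T x_{2} = 0$, and $\normaltr T x_{2}\big\vert_{\Gamma_{1}} = 0$.

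This is exactly where the hypothesis $n \geq 2$ enters. Since $\Omega$ is open and nonempty, I would fix a ball $B \subseteq \Omega$ and choose $\chi \in \sfC_{\mathrm{c}}^{\infty}(B)$ with $\partial_{1}\chi \not\equiv 0$, extended by zero to $\Omega$. Using two coordinate directions (available because $n\ge 2$), put $g \coloneqq (\partial_{2}\chi,\, -\partial_{1}\chi,\, 0, \dotsc, 0)$. Then $g \in \sfC_{\mathrm{c}}^{\infty}(\Omega)^{n}$ is divergence free, since $\div g = \partial_{1}\partial_{2}\chi - \partial_{2}\partial_{1}\chi = 0$, and $g \neq 0$ by the choice of $\chi$. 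As $g$ has compact support in the interior of $\Omega$, its normal trace vanishes on all of $\partial\Omega$, in particular on $\Gamma_{1}$. Setting $x_{2} \coloneqq T^{-1} g$---which lies in $\Lp{2}(\Omega)^{n}$ because $T^{-1} \in \Lp{\infty}(\Omega)^{n\times n}$, and is nonzero because $T$ is invertible---I obtain $T x_{2} = g \in \Hspace(\div,\Omega)$ with $\div T x_{2} = 0$ and $\normaltr T x_{2}\big\vert_{\Gamma_{1}} = 0$. Since also $\tfrac1\rho\cdot 0 \in \Hspace^{1}_{\Gamma_{0}}(\Omega)$, the element $x \coloneqq \begin{bsmallmatrix} 0 \\ x_{2} \end{bsmallmatrix}$ is a nonzero member of $\dom(A_{0})$ with $A_{0}x = 0$, so $0$ is an eigenvalue of $A_{0}$.

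For the stability statement I would argue as follows. Let $(T_{0}(t))_{t\ge 0}$ be the semigroup generated by $A_{0}$. Since $A_{0}x = 0$, the constant map $t \mapsto x$ solves the abstract Cauchy problem with initial value $x$, hence $T_{0}(t)x = x$ for all $t\ge 0$. Therefore $\norm{T_{0}(t)x}$ equals the constant $\norm{x} > 0$ and does not converge to $0$, so $(T_{0}(t))_{t\ge 0}$ is not strongly stable.

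I do not anticipate a genuine obstacle; the only delicate point is the origin of the nontrivial eigenvector, and the explicit field $g$ makes this transparent. Conceptually, $g$ is divergence free with vanishing normal trace on $\Gamma_{1}$, that is $g \in (\grad \Hspace^{1}_{\Gamma_{0}}(\Omega))^{\perp}$, and the construction merely certifies that this orthogonal complement is nontrivial once $n \geq 2$. This is precisely the ingredient that is missing from the argument of \Cref{th:zero-no-eigenvalue}: on $\XH$ the second component is forced to be a gradient, so a divergence-free gradient with these boundary data must vanish, whereas on $\Lp{2}(\Omega)^{n+1}$ the field $g$ need not be a gradient and thus yields a genuine eigenvector.
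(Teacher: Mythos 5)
Your proposal is correct and follows essentially the same route as the paper: the paper also takes $x_{1}=0$ and $x_{2}=T^{-1}(\partial_{2}\phi,\,-\partial_{1}\phi,\,0,\dotsc,0)$ for a nonzero $\phi\in\sfC_{\mathrm{c}}^{\infty}(\Omega)$, uses the antisymmetry of mixed partials to get $\div Tx_{2}=0$, the compact support to satisfy the boundary condition, and the resulting constant solution of the Cauchy problem to rule out strong stability. Your only additions --- insisting $\partial_{1}\chi\not\equiv 0$ to certify $x_{2}\neq 0$, and the closing remark contrasting with \Cref{th:zero-no-eigenvalue} --- are harmless refinements of details the paper leaves implicit.
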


\begin{proof}
  Choose the components of
  $
  x =
  \left[
    \begin{smallmatrix}
      x_{1} \\ x_{2}
    \end{smallmatrix}
  \right]
  $
  as
  \begin{align*}
    x_{1} = 0
    \quad \text{and} \quad
    x_{2} = T^{-1}
    \begin{bsmallmatrix}
      \partial_{2} \phi \\ -\partial_{1} \phi \\ 0 \\ \smatvdots \\ 0
    \end{bsmallmatrix}
    ,
  \end{align*}
  where $\phi$ is any non zero $\sfC_{\mathrm{c}}^{\infty}(\Omega)$ function.
  Then $x_{2} \neq 0$ and $\div T x_{2} = \partial_{1}\partial_{2} \phi - \partial_{2}\partial_{1} \phi = 0$.
  Since $\phi$ has compact support, $x$ satisfies the boundary conditions. Thus $A_{0}$ cannot generate a strongly stable semigroup, since the eigenvector $x$ to $\lambda = 0$ is a constant solution of the Cauchy problem.
\end{proof}

\section{Conclusion}
In this paper we showed semi-uniform stability of the multidimensional wave equation equipped with a scattering passive feedback law. Further, we proved the the corresponding port-Hamiltonian operator has a compact resolvent.

To get compact embeddings for the port-Hamiltonian operator of the wave equation it is necessary to choose an adequate state space.
This is a new aspect that arises for spatial multidimensional port-Hamiltonian systems as in the one-dimensional spatial setting the compact embedding is always given.
It is likely that most of the techniques presented in this article will translate for general linear port-Hamiltonian systems on multidimensional spatial domains (see~\cite{skrepek-2020}) like Maxwell's equations and the Mindlin plate model. Probably the crucial tool will be a unique continuation principle.

Moreover, there is an interesting link between the resolvent set of the port-Hamiltonian operator of the wave equation and solvability of lossy Helmholtz equations. Since in the theory of Helmholtz equations (especially in view of finite element methods) a uniform bound of the solution operator is of interest, it might be possible to use results from that theory to give explicit decay rates for the semi-uniform stability or even obtain exponential stability under certain assumptions. For constant coefficients we can find such estimates in~\cite{melenk-diss,melenk-sauter-torres,gander-graham-spence-2015}.
There are some recent works on these estimates with non constant coefficients~\cite{graham-pembery-spence-2019,graham-sauter-2020}.


\begin{thebibliography}{10}

\bibitem{bar92}
C.~Bardos, G.~Lebeau, and J.~Rauch.
\newblock Sharp sufficient conditions for the observation, control, and
  stabilization of waves from the boundary.
\newblock {\em SIAM J. Control Optim.}, 30(5):1024--1065, 1992.

\bibitem{chill-seifert-tomilov-2020}
R.~Chill, D.~Seifert, and Y.~Tomilov.
\newblock Semi-uniform stability of operator semigroups and energy decay of
  damped waves.
\newblock {\em Philos. Trans. Roy. Soc. A}, 378(2185):20190614, 24, 2020.

\bibitem{dautray-lions-vol1}
R.~Dautray and J.-L. Lions.
\newblock {\em Mathematical analysis and numerical methods for science and
  technology. {V}ol. 1}.
\newblock Springer-Verlag, Berlin, 1990.
\newblock Physical origins and classical methods, With the collaboration of
  Philippe B\'{e}nilan, Michel Cessenat, Andr\'{e} Gervat, Alain Kavenoky and
  H\'{e}l\`ene Lanchon.

\bibitem{dautray-lions-vol3}
R.~Dautray and J.-L. Lions.
\newblock {\em Mathematical analysis and numerical methods for science and
  technology. {V}ol. 3}.
\newblock Springer-Verlag, Berlin, 1990.
\newblock Spectral theory and applications, With the collaboration of Michel
  Artola and Michel Cessenat.

\bibitem{DuinMacc09}
V.~Duindam, A.~Macchelli, S.~Stramigioli, and H.~Bruyninckx, editors.
\newblock {\em Modeling and Control of Complex Physical Systems}.
\newblock Springer, Germany, 2009.

\bibitem{EbMS07}
D.~Eberard, B.~M. Maschke, and A.~J. van~der Schaft.
\newblock An extension of {H}amiltonian systems to the thermodynamic phase
  space: towards a geometry of nonreversible processes.
\newblock {\em Rep. Math. Phys.}, 60(2):175--198, 2007.

\bibitem{engel-nagel}
K.-J. Engel and R.~Nagel.
\newblock {\em One-parameter semigroups for linear evolution equations}, volume
  194 of {\em Graduate Texts in Mathematics}.
\newblock Springer-Verlag, New York, 2000.
\newblock With contributions by S. Brendle, M. Campiti, T. Hahn, G. Metafune,
  G. Nickel, D. Pallara, C. Perazzoli, A. Rhandi, S. Romanelli and R.
  Schnaubelt.

\bibitem{evans-pdes}
L.~C. Evans.
\newblock {\em Partial differential equations}, volume~19 of {\em Graduate
  Studies in Mathematics}.
\newblock American Mathematical Society, Providence, RI, second edition, 2010.

\bibitem{gander-graham-spence-2015}
M.~J. Gander, I.~G. Graham, and E.~A. Spence.
\newblock Applying {GMRES} to the {H}elmholtz equation with shifted {L}aplacian
  preconditioning: what is the largest shift for which wavenumber-independent
  convergence is guaranteed?
\newblock {\em Numer. Math.}, 131(3):567--614, 2015.

\bibitem{graham-pembery-spence-2019}
I.~G. Graham, O.~R. Pembery, and E.~A. Spence.
\newblock The {H}elmholtz equation in heterogeneous media: a priori bounds,
  well-posedness, and resonances.
\newblock {\em J. Differential Equations}, 266(6):2869--2923, 2019.

\bibitem{graham-sauter-2020}
I.~G. Graham and S.~A. Sauter.
\newblock Stability and finite element error analysis for the {H}elmholtz
  equation with variable coefficients.
\newblock {\em Math. Comp.}, 89(321):105--138, 2020.

\bibitem{humaloja-kurula-paunonen}
J.-P. Humaloja, M.~Kurula, and L.~Paunonen.
\newblock Approximate robust output regulation of boundary control systems.
\newblock {\em IEEE Trans. Automat. Control}, 64(6):2210--2223, 2019.

\bibitem{JacZwa12}
B.~Jacob and H.~Zwart.
\newblock {\em Linear Port-Hamiltonian Systems on Infinite-dimensional Spaces}.
\newblock Number 223 in Operator Theory: Advances and Applications. Springer,
  Germany, 2012.

\bibitem{JeSc09}
D.~Jeltsema and A.~J. van~der Schaft.
\newblock Lagrangian and {H}amiltonian formulation of transmission line systems
  with boundary energy flow.
\newblock {\em Rep. Math. Phys.}, 63(1):55--74, 2009.

\bibitem{kurula-zwart-wave}
M.~Kurula and H.~Zwart.
\newblock Linear wave systems on {$n$}-{D} spatial domains.
\newblock {\em Internat. J. Control}, 88(5):1063--1077, 2015.

\bibitem{lag83}
J.~Lagnese.
\newblock Decay of solutions of wave equations in a bounded region with
  boundary dissipation.
\newblock {\em J. Differential Equations}, 50(2):163--182, 1983.

\bibitem{MM05}
A.~Macchelli and C.~Melchiorri.
\newblock Control by interconnection of mixed port {H}amiltonian systems.
\newblock {\em IEEE Trans. Automat. Control}, 50(11):1839--1844, 2005.

\bibitem{melenk-diss}
J.~M. Melenk.
\newblock {\em On generalized finite-element methods}.
\newblock ProQuest LLC, Ann Arbor, MI, 1995.
\newblock Thesis (Ph.D.)--University of Maryland, College Park.

\bibitem{melenk-sauter-torres}
J.~M. Melenk, S.~A. Sauter, and C.~Torres.
\newblock Wavenumber explicit analysis for {G}alerkin discretizations of lossy
  {H}elmholtz problems.
\newblock {\em SIAM J. Numer. Anal.}, 58(4):2119--2143, 2020.

\bibitem{monk}
P.~Monk.
\newblock {\em Finite element methods for {M}axwell's equations}.
\newblock Numerical Mathematics and Scientific Computation. Oxford University
  Press, New York, 2003.

\bibitem{quinn-russell-1977}
J.~P. Quinn and D.~L. Russell.
\newblock Asymptotic stability and energy decay rates for solutions of
  hyperbolic equations with boundary damping.
\newblock {\em Proc. Roy. Soc. Edinburgh Sect. A}, 77(1-2):97--127, 1977.

\bibitem{vanDerSchaft06}
A.~Schaft.
\newblock Port-{H}amiltonian systems: an introductory survey.
\newblock {\em Proceedings on the International Congress of Mathematicians,
  Vol. 3, pags. 1339-1366}, 01 2006.

\bibitem{VanDerSchaftMaschke_2002}
A.~Schaft and B.~Maschke.
\newblock Hamiltonian formulation of distributed-parameter systems with
  boundary energy flow.
\newblock {\em Journal of Geometry and Physics}, 42:166--194, 2002.

\bibitem{skrepek-2020}
N.~Skrepek.
\newblock Well-posedness of linear first order port-hamiltonian systems on
  multidimensional spatial domains.
\newblock {\em Evol. Equ. Control Theory}, 2020.

\bibitem{su-tucsnak-weiss-2020}
P.~Su, M.~Tucsnak, and G.~Weiss.
\newblock Stabilizability properties of a linearized water waves system.
\newblock {\em Systems Control Lett.}, 139:104672, 10, 2020.

\bibitem{UCP-TS}
X.~Tao and S.~Zhang.
\newblock Boundary unique continuation theorems under zero {N}eumann boundary
  conditions.
\newblock {\em Bull. Austral. Math. Soc.}, 72(1):67--85, 2005.

\bibitem{Villegas_2007}
J.~Villegas.
\newblock {\em A Port-{H}amiltonian Approach to Distributed Parameter Systems}.
\newblock PhD thesis, University of Twente, Netherlands, 2007.

\bibitem{yosida-fa}
K.~Yosida.
\newblock {\em Functional analysis}, volume 123 of {\em Grundlehren der
  Mathematischen Wissenschaften [Fundamental Principles of Mathematical
  Sciences]}.
\newblock Springer-Verlag, Berlin-New York, sixth edition, 1980.

\bibitem{ZwaGorMasVil10}
H.~Zwart, Y.~{Le Gorrec}, B.~Maschke, and J.~Villegas.
\newblock Well-posedness and regularity of hyperbolic boundary control systems
  on a one-dimensional spatial domain.
\newblock {\em ESAIM: COCV}, 16(4):1077--1093, 2010.

\end{thebibliography}

\appendix

\section{Sobolev spaces}

We want to recall the most important relations between certain Sobolev spaces and boundary operators for our purpose. Details can be found in~\cite{monk}, \cite{dautray-lions-vol1}, \cite[ch.~IX]{dautray-lions-vol3} or in the appendix of~\cite{kurula-zwart-wave}. Let $\Omega \subseteq \R^{n}$ be open with bounded Lipschitz boundary. We define
\begin{align*}
  \Hspace^{1}(\Omega) &\coloneqq \dset{x \in \Lp{2}(\Omega)}{\grad x \in \Lp{2}(\Omega)^{n}},\\
  \Hspace(\div,\Omega) &\coloneqq \dset{x \in \Lp{2}(\Omega)^{n}}{\div x \in \Lp{2}(\Omega)},
\end{align*}
where $\div x$ and $\grad x$ are defined in a distributional sense. Here $\Hspace^{1}(\Omega)$ is equipped with the graph norm of $\grad$ and $\Hspace(\div, \Omega)$ is equipped with the graph norm of $\div$. By $\sfC_{\mathrm{c}}^{\infty}(\Omega)$ we denote the space of functions on $\Omega$ that are infinitely differentiable with compact support.
The restriction mapping $x \mapsto x\big\vert_{\partial \Omega}$ is defined for continuous functions on $\cl{\Omega}$.
There exists a continuous extension of this mapping from $\Hspace^{1}(\Omega)$ to $\Lp{2}(\partial\Omega)$. We define $\Hspace^{\nicefrac{1}{2}}(\partial\Omega)$ as the image of this mapping equipped with the range norm (and range inner product). Corresponding we define its (anti)dual space $\Hspace^{-\nicefrac{1}{2}}(\partial\Omega)$.
For $x, y\in \sfC_{\mathrm{c}}^{\infty}(\Omega)$ we have a integration by parts formula:
\begin{align*}
  \scprod{\div x}{y}_{\Lp{2}(\Omega)} + \scprod*{x}{\grad y}_{\Lp{2}(\Omega)}
  = \scprod*{\nu \cdot x\big\vert_{\partial\Omega}}{y\big\vert_{\partial\Omega}}_{\Lp{2}(\partial\Omega)}.
\end{align*}
This can be continuously extended to $x \in \Hspace(\div, \Omega)$ and $y \in \Hspace^{1}(\Omega)$, if we replace the $\Lp{2}(\partial\Omega)$ inner product by a dual pairing:
\begin{equation*}
  \scprod{\div x}{y}_{\Lp{2}(\Omega)} + \scprod*{x}{\grad y}_{\Lp{2}(\Omega)}\\
  = \dualprod{\normaltr x}{\boundtr y}_{\Hspace^{-\nicefrac{1}{2}}(\partial\Omega),\Hspace^{\nicefrac{1}{2}}(\partial\Omega)},
\end{equation*}
where $\boundtr \colon \Hspace^{1}(\Omega) \to \Hspace^{\nicefrac{1}{2}}(\partial\Omega)$ is the boundary trace (extension of $y \mapsto y\big\vert_{\partial\Omega}$) and $\normaltr \colon \Hspace(\div,\Omega) \to \Hspace^{-\nicefrac{1}{2}}(\partial\Omega)$ is the extension of $\nu \cdot x\big\vert_{\partial \Omega}$.
Furthermore,
\begin{align*}
  \Hspace^{1}_{\Gamma_{0}}(\Omega) \coloneqq
  \dset*{x \in \Hspace^{1}(\Omega)}{(\boundtr x)\big\vert_{\Gamma_{0}} = 0 \text{ in } \Lp{2}(\partial\Omega)}
\end{align*}
and $\Hspace^{\nicefrac{1}{2}}(\Gamma_{1})$ is defined as $\ran \boundtr\big\vert_{\Hspace^{1}_{\Gamma_{0}}(\Omega)}$ endowed with inner product from $\Hspace^{\nicefrac{1}{2}}(\partial\Omega)$ (for $g \in \Hspace^{\nicefrac{1}{2}}(\Gamma_{1})$ we can say that $g\big\vert_{\Gamma_{0}} = 0$).
We denote its (anti)dual space by $\Hspace^{-\nicefrac{1}{2}}(\Gamma_{1})$.
Then there is the following integration by parts formula for $x \in \Hspace(\div,\Omega)$ and $y\in \Hspace^{1}_{\Gamma_{0}}(\Omega)$
\begin{equation*}
  \scprod{\div x}{y}_{\Lp{2}(\Omega)} + \scprod{x}{\grad y}_{\Lp{2}(\Omega)}\\
  = \dualprod{\normaltr x}{\boundtr y}_{\Hspace^{-\nicefrac{1}{2}}(\Gamma_{1}),\Hspace^{\nicefrac{1}{2}}(\Gamma_{1})}.
\end{equation*}
We say $\normaltr x$ is in $\Lp{2}(\Gamma_{1})$, if there is an $f\in \Lp{2}(\Gamma_{1})$ such that
\begin{align*}
  \dualprod{\normaltr x}{\boundtr y}_{\Hspace^{-\nicefrac{1}{2}}(\Gamma_{1}),\Hspace^{\nicefrac{1}{2}}(\Gamma_{1})}
  = \scprod{f}{\boundtr y}_{\Lp{2}(\Gamma_{1})}
\end{align*}
for all $
  y \in \Hspace^{1}_{\Gamma_{0}}(\Omega)$.
Clearly in this case we say $\normaltr x = f$ and we can write the integration by parts formula as
\begin{align*}
  \scprod{\div x}{y}_{\Lp{2}(\Omega)} + \scprod{x}{\grad y}_{\Lp{2}(\Omega)}
  = \dualprod{\normaltr x}{\boundtr y}_{\Lp{2}(\Gamma_{1})}.
\end{align*}

\section{Solutions}

In this section we will discuss a suitable solution concept for~\eqref{eq:system-classical-formulation}. We will regard a solution $w(\cdot,\cdot)$ as a function in time mapping into spatial function space.

An integrated version of the PDE is
\begin{align*}
  \rho(\zeta) \frac{\partial}{\partial t} w(t,\zeta) - \rho(\zeta) w_{1}(\zeta)
  = \int_{0}^{t} \div T(\zeta) \grad w(s,\zeta) \dx[s].
\end{align*}
We will demand that a solution will satisfy this integrated version of the PDE\@.

If we assume that both $\int_{0}^{t} \div T(\zeta) \grad w(s,\zeta) \dx[s]$ and $\div T(\zeta) \int_{0}^{t} \grad w(s,\zeta) \dx[s]$ exist, then they coincide and
\begin{align*}
  \rho(\zeta) \frac{\partial}{\partial t} w(t,\zeta) - \rho(\zeta) w_{1}(\zeta)
  =  \div T(\zeta) \int_{0}^{t} \grad w(s,\zeta) \dx[s].
\end{align*}
This is a consequence of the closedness of $\div$. For a classical solution ($w \in \sfC^{2}(\R_{+}\times\Omega) \cap \sfC^{1}(\R_{+} \times \cl{\Omega})$) these integrals coincide.

We will also regard an integrated version of the boundary conditions:
\begin{align*}
  \int_{0}^{t} \frac{\diffd}{\diffd s} w(s,\zeta) \dx[s]
  = -k \int_{0}^{t} \nu \cdot T \grad w(s,\zeta) \dx[s]
 \end{align*}
 for all $\zeta \in \Gamma_{1}$.
Again for classical solutions this can be manipulated to
\begin{align*}
  w(t,\zeta) - w(0,\zeta) &= -k \nu \cdot T \int_{0}^{t} \grad w(s,\zeta) \dx[s]
  \quad \text{for all} \quad \zeta \in \Gamma_{1}, \\
  \boundtr w(t,\cdot)\big\vert_{\Gamma_{1}} - \boundtr w(0,\cdot)\big\vert_{\Gamma_{1}}
  &= -k \normaltr \Big(T \int_{0}^{t} \grad w(s,\cdot) \dx[s] \Big)\Big\vert_{\Gamma_{1}}.
\end{align*}

\begin{definition}\label{def:solution}
  Let $w_{0} \in \Hspace^{1}(\Omega)$  and $w_{1} \in \Lp{2}(\Omega)$.
  Then we say that $w(\cdot,\cdot)$ is a \emph{solution} of~\eqref{eq:system-classical-formulation}, if
  $t \mapsto w(t,\cdot)$ is $\sfC^{1}(\R_{+};\Lp{2}(\Omega)) \cap \sfC^{0}(\R_{+};\Hspace^{1}(\Omega))$,
  and
  \begin{align*}
    \rho \frac{\diffd}{\diffd t} w(t,\cdot) - \rho w_{1}
    &=  \div T \int_{0}^{t} \grad w(s,\cdot) \dx[s], \\
    w(0,\cdot) &= w_{0}, \\
    \frac{\diffd}{\diffd t} w(t,\cdot)\Big\vert_{t=0} &= w_{1}, \\[1.2ex]
    \boundtr w(t,\cdot)\big\vert_{\Gamma_{0}} &= h, \\
    \boundtr w(t,\cdot)\big\vert_{\Gamma_{1}} - \boundtr w_{0}\big\vert_{\Gamma_{1}}
    &= -k \normaltr \Big(T \int_{0}^{t} \grad w(s,\cdot) \dx[s]\Big)\Big\vert_{\Gamma_{1}},
  \end{align*}
  for all $t \in \R_{+}$.
\end{definition}

\begin{proposition}\label{th:equivalence-of-solutions}
  Let $w$ is a solution of~\eqref{eq:system-classical-formulation} in the sense of \Cref{def:solution} and $w_{\mathrm{e}}$ the solution of the equilibrium system~\eqref{eq:equilibrium-system}.
  Then
  \begin{align*}
    \begin{bmatrix}
      \rho \frac{\partial}{\partial t} w(t,\cdot) \\
      \grad w(t,\cdot) - \grad w_{\mathrm{e}}
    \end{bmatrix}
    \quad \text{and}\quad
    T(t)
    \begin{bmatrix}
      \rho w_{1} \\
      \grad w_{0} - \grad w_{\mathrm{e}}
    \end{bmatrix}
  \end{align*}
  coincide, where $T$ is the semigroup generated by $A$.

  On the other hand, let $x_{1}$ denote the first component of the solution given by the semigroup. Then
  \begin{align*}
    w(t,\cdot)\coloneqq \int_{0}^{t} \frac{1}{\rho} x_{1}(s) \dx[s] + w_{0} + w_{\mathrm{e}}
  \end{align*}
  is a solution of~\eqref{eq:system-classical-formulation} in the sense of \Cref{def:solution}.
\end{proposition}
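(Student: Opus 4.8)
The plan is to use, in both directions, the integrated (mild) form of the abstract Cauchy problem for $A$ as the common bridge to \Cref{def:solution}: a continuous curve $x\colon[0,\infty)\to\XH$ equals $T(\cdot)x(0)$ precisely when $\int_0^t x(s)\dx[s]\in\dom(A)$ and $x(t)-x(0)=A\int_0^t x(s)\dx[s]$ for every $t$. Since \Cref{def:solution} is itself phrased through time integrals, no pointwise differentiation in $t$ is needed, which is essential because the data only give $x(0)\in\XH$ and not $x(0)\in\dom(A)$.

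For the first assertion I would set $x(t):=\begin{bsmallmatrix}\rho\,\partial_t w(t,\cdot)\\ \grad w(t,\cdot)-\grad w_{\mathrm{e}}\end{bsmallmatrix}$ and first check $x\in\sfC^0([0,\infty);\XH)$, using $w\in\sfC^1(\R_+;\Lp{2}(\Omega))\cap\sfC^0(\R_+;\Hspace^1(\Omega))$ together with $w(t,\cdot)-w_{\mathrm{e}}\in\Hspace^1_{\Gamma_0}(\Omega)$ (which follows from $\boundtr w(t,\cdot)\vert_{\Gamma_0}=h=\boundtr w_{\mathrm{e}}\vert_{\Gamma_0}$). Next I would compute $\int_0^t x(s)\dx[s]=\begin{bsmallmatrix}\rho(w(t,\cdot)-w_0)\\ \int_0^t\grad w(s,\cdot)\dx[s]-t\grad w_{\mathrm{e}}\end{bsmallmatrix}$ and verify it lies in $\dom(A)$: the first slot scaled by $\tfrac1\rho$ is $w(t,\cdot)-w_0\in\Hspace^1_{\Gamma_0}(\Omega)$; the second slot lies in $\grad\Hspace^1_{\Gamma_0}(\Omega)$; $T$ times the second slot is in $\Hspace(\div,\Omega)$ because, via the integrated PDE of \Cref{def:solution} and $\div T\grad w_{\mathrm{e}}=0$, its divergence equals $\rho\,\partial_t w-\rho w_1\in\Lp{2}(\Omega)$; and the $\Gamma_1$-condition $\boundtr\tfrac1\rho(\cdot)=-k\normaltr T(\cdot)$ is exactly the integrated boundary relation of \Cref{def:solution} combined with $\normaltr T\grad w_{\mathrm{e}}\vert_{\Gamma_1}=0$. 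Applying $A$ to this integral then reproduces $x(t)-x(0)$, so $x$ is the mild solution and hence $x(t)=T(t)x(0)$. Uniqueness is the standard reduction: for the difference $z$ of two such curves, $Z(t):=\int_0^t z(s)\dx[s]$ satisfies $Z'(t)=z(t)=AZ(t)$ with $Z(t)\in\dom(A)$ and $Z(0)=0$, so it is a classical solution of $\dot Z=AZ$; hence $Z\equiv0$ and thus $z\equiv0$.

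For the converse I would write $x(t)=T(t)x_0$ with $x_0=\begin{bsmallmatrix}\rho w_1\\ \grad w_0-\grad w_{\mathrm{e}}\end{bsmallmatrix}$ and define $w$ by the displayed integral formula. The key identity is $\grad w(t,\cdot)=x_2(t)+\grad w_{\mathrm{e}}$, which I would extract from the second component of the integrated relation $x(t)-x_0=A\int_0^t x(s)\dx[s]$, namely $x_2(t)-x_2(0)=\grad\tfrac1\rho\int_0^t x_1(s)\dx[s]$, invoking closedness of $\grad$ so that no interchange of $\grad$ with the Bochner integral is required. This identity yields $w\in\sfC^0(\R_+;\Hspace^1(\Omega))$, while $w\in\sfC^1(\R_+;\Lp{2}(\Omega))$ with $\partial_t w=\tfrac1\rho x_1$ is immediate. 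The remaining clauses of \Cref{def:solution} then follow: the integrated PDE because $\rho\,\partial_t w-\rho w_1=x_1(t)-x_1(0)=\div T\int_0^t x_2(s)\dx[s]=\div T\int_0^t\grad w(s,\cdot)\dx[s]$ (again using $\div T\grad w_{\mathrm{e}}=0$); the initial and derivative conditions by evaluating at $t=0$; and both trace conditions from $\int_0^t x(s)\dx[s]\in\dom(A)$ together with the equilibrium relations for $w_{\mathrm{e}}$.

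I expect the main obstacle to be that the whole argument must stay at mild-solution regularity: one may neither differentiate $x(t)$ in $t$ nor pull $\grad$ or $\div$ pointwise through the time integrals, so every step has to be routed through the single identity $x(t)-x(0)=A\int_0^t x(s)\dx[s]$ and the closedness of $\grad$ and $\div$. The most delicate bookkeeping is the reconciliation of the two boundary regimes --- the Dirichlet trace on $\Gamma_0$ and the feedback trace on $\Gamma_1$ --- where the integrated boundary relation of \Cref{def:solution} must be matched with the $\dom(A)$-boundary condition precisely through the equilibrium identities $\div T\grad w_{\mathrm{e}}=0$ and $\normaltr T\grad w_{\mathrm{e}}\vert_{\Gamma_1}=0$.
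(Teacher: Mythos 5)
First, a point of order: the paper itself states this proposition \emph{without proof} (the proof environment that follows it in the source belongs to the next theorem, on solvability of the equilibrium system), so there is no authors' argument to compare against; I assess your proposal on its own merits. Your forward direction is correct and complete: the characterization of semigroup orbits via the integrated identity $x(t)-x(0)=A\int_{0}^{t}x(s)\dx[s]$ with $\int_{0}^{t}x(s)\dx[s]\in\dom(A)$, the verification of membership in $\dom(A)$ (integrated PDE plus $\div T\grad w_{\mathrm{e}}=0$ for the $\Hspace(\div,\Omega)$ part, the integrated $\Gamma_{1}$-relation plus $\normaltr T\grad w_{\mathrm{e}}\big\vert_{\Gamma_{1}}=0$ for the feedback condition), and the reduction of uniqueness to classical solutions are all sound.

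The converse direction, however, contains a genuine inconsistency. You take $x_{0}=\begin{bsmallmatrix}\rho w_{1}\\ \grad w_{0}-\grad w_{\mathrm{e}}\end{bsmallmatrix}$ \emph{and} the displayed formula $w(t,\cdot)=\int_{0}^{t}\frac{1}{\rho}x_{1}(s)\dx[s]+w_{0}+w_{\mathrm{e}}$ with the same $w_{0}$ in both. From the integrated relation $x_{2}(t)-x_{2}(0)=\grad\tfrac{1}{\rho}\int_{0}^{t}x_{1}(s)\dx[s]$ one then gets
\begin{equation*}
  \grad w(t,\cdot)
  = \bigl(x_{2}(t)-x_{2}(0)\bigr)+\grad w_{0}+\grad w_{\mathrm{e}}
  = x_{2}(t)+2\grad w_{\mathrm{e}},
\end{equation*}
not the key identity $\grad w(t,\cdot)=x_{2}(t)+\grad w_{\mathrm{e}}$ that you claim. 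The discrepancy is not cosmetic: it gives $w(0,\cdot)=w_{0}+w_{\mathrm{e}}\neq w_{0}$ and, on the Dirichlet part, $\boundtr w(t,\cdot)\big\vert_{\Gamma_{0}}=\boundtr w_{0}\big\vert_{\Gamma_{0}}+\boundtr w_{\mathrm{e}}\big\vert_{\Gamma_{0}}=2h$, so the clauses $w(0,\cdot)=w_{0}$ and $\boundtr w(t,\cdot)\big\vert_{\Gamma_{0}}=h$ of \Cref{def:solution} fail whenever $w_{\mathrm{e}}\neq 0$, respectively $h\neq 0$. (Only the integrated PDE survives, because $\div T\grad w_{\mathrm{e}}=0$ annihilates the spurious term.)

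The root cause is that the proposition uses the symbol $w_{0}$ inconsistently between its two halves, and a proof must resolve this rather than assert both readings simultaneously. In the reconstruction formula, $w_{0}$ must be the $\Hspace^{1}_{\Gamma_{0}}(\Omega)$-potential of the initial gradient component, i.e.\ $x_{2}(0)=\grad w_{0}$ with $w_{0}\in\Hspace^{1}_{\Gamma_{0}}(\Omega)$ --- in the notation of the first half this is $w_{0}-w_{\mathrm{e}}$. Equivalently, keep your $x_{0}$ and define $w(t,\cdot)=\int_{0}^{t}\frac{1}{\rho}x_{1}(s)\dx[s]+w_{0}$ without the extra $w_{\mathrm{e}}$. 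With either correction your own computations go through verbatim: the key identity becomes $\grad w(t,\cdot)=x_{2}(t)+\grad w_{\mathrm{e}}$, the initial conditions and the $\Gamma_{0}$-trace come out right, and the $\Gamma_{1}$-clause follows from $\int_{0}^{t}x(s)\dx[s]\in\dom(A)$ together with $\normaltr T\grad w_{\mathrm{e}}\big\vert_{\Gamma_{1}}=0$, exactly as you describe.
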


\begin{remark}
  If we regard the semigroup $T_{0}$ generated by $A_{0}$, we can even cancel out $\grad w_{\mathrm{e}}$ and obtain
  \begin{align*}
        \begin{bmatrix}
      \rho \frac{\partial}{\partial t} w(t,\cdot) \\
      \grad w(t,\cdot)
    \end{bmatrix}
    =
    T_{0}(t)
    \begin{bmatrix}
      \rho w_{1} \\
      \grad w_{0}
    \end{bmatrix}
  \end{align*}
\end{remark}

\begin{theorem}
  The system~\eqref{eq:equilibrium-system} is solvable for $h \in \Hspace^{\nicefrac{1}{2}}(\Gamma_{0})$.
\end{theorem}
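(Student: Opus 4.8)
The plan is to recast \eqref{eq:equilibrium-system} in weak form and solve it with the Lax--Milgram theorem. Assume $\Gamma_{0}\neq\emptyset$ (otherwise there is no Dirichlet datum and the assertion is vacuous). Since $h\in\Hspace^{\nicefrac{1}{2}}(\Gamma_{0})$ lies in the range of the boundary trace, I can fix a lifting $\tilde h\in\Hspace^{1}(\Omega)$ with $\boundtr\tilde h\big\vert_{\Gamma_{0}}=h$. Writing $w_{\mathrm{e}}=\tilde h+\psi$ with unknown $\psi\in\Hspace^{1}_{\Gamma_{0}}(\Omega)$ builds in the Dirichlet condition on $\Gamma_{0}$, and testing $\div T\grad w_{\mathrm{e}}=0$ against $\vv\in\Hspace^{1}_{\Gamma_{0}}(\Omega)$ reduces the problem to finding $\psi\in\Hspace^{1}_{\Gamma_{0}}(\Omega)$ with $a(\psi,\vv)=F(\vv)$ for all such $\vv$, where $a(\psi,\vv)\coloneqq\scprod{T\grad\psi}{\grad\vv}_{\Lp{2}(\Omega)}$ and $F(\vv)\coloneqq-\scprod{T\grad\tilde h}{\grad\vv}_{\Lp{2}(\Omega)}$.

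Next I would check the Lax--Milgram hypotheses. Boundedness of $a$ and of the antilinear functional $F$ is immediate from $T\in\Lp{\infty}(\Omega)^{n\times n}$. For coercivity I would combine the positivity of $T$ together with $T^{-1}\in\Lp{\infty}$, which yields $\Re\scprod{T\xi}{\xi}\geq\norm{T^{-1}}_{\Lp{\infty}}^{-1}\abs{\xi}^{2}$ a.e., with Poincar\'e's inequality on $\Hspace^{1}_{\Gamma_{0}}(\Omega)$ (the very inequality already used in \Cref{sec2}), to obtain $\Re a(\psi,\psi)\geq c\norm{\grad\psi}_{\Lp{2}(\Omega)}^{2}\geq c'\norm{\psi}_{\Hspace^{1}(\Omega)}^{2}$. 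Lax--Milgram then provides a unique $\psi\in\Hspace^{1}_{\Gamma_{0}}(\Omega)$, and I set $w_{\mathrm{e}}\coloneqq\tilde h+\psi\in\Hspace^{1}(\Omega)$.

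Finally I would recover the strong formulation. Testing with $\vv\in\sfC_{\mathrm{c}}^{\infty}(\Omega)\subseteq\Hspace^{1}_{\Gamma_{0}}(\Omega)$ gives $\scprod{T\grad w_{\mathrm{e}}}{\grad\vv}_{\Lp{2}(\Omega)}=0$, so $T\grad w_{\mathrm{e}}\in\Hspace(\div,\Omega)$ with $\div T\grad w_{\mathrm{e}}=0$ in $\Lp{2}(\Omega)$, while $\boundtr w_{\mathrm{e}}\big\vert_{\Gamma_{0}}=h$ holds by construction. For the natural condition on $\Gamma_{1}$ I would invoke the integration by parts formula from the appendix: for every $\vv\in\Hspace^{1}_{\Gamma_{0}}(\Omega)$,
\[
0=\scprod{\div T\grad w_{\mathrm{e}}}{\vv}_{\Lp{2}(\Omega)}+\scprod{T\grad w_{\mathrm{e}}}{\grad\vv}_{\Lp{2}(\Omega)}=\dualprod{\normaltr T\grad w_{\mathrm{e}}}{\boundtr\vv}_{\Hspace^{-\nicefrac{1}{2}}(\Gamma_{1}),\Hspace^{\nicefrac{1}{2}}(\Gamma_{1})}.
\]
Because $\boundtr$ maps $\Hspace^{1}_{\Gamma_{0}}(\Omega)$ onto $\Hspace^{\nicefrac{1}{2}}(\Gamma_{1})$, the traces $\boundtr\vv$ exhaust $\Hspace^{\nicefrac{1}{2}}(\Gamma_{1})$, forcing $\normaltr T\grad w_{\mathrm{e}}\big\vert_{\Gamma_{1}}=0$, i.e.\ $\frac{\partial w_{\mathrm{e}}}{\partial T\nu}=0$ on $\Gamma_{1}$. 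Hence $w_{\mathrm{e}}$ solves \eqref{eq:equilibrium-system}.

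The only genuinely delicate ingredient is coercivity, which rests on the Poincar\'e inequality on $\Hspace^{1}_{\Gamma_{0}}(\Omega)$ and hence on $\Gamma_{0}$ carrying positive surface measure; this is precisely the inequality the paper already uses to close $\grad\Hspace^{1}_{\Gamma_{0}}(\Omega)$. The lifting of $h$, the boundedness of $a$ and $F$, and the passage from the weak to the strong form via integration by parts are all routine.
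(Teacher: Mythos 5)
Your proof is correct and follows essentially the same route as the paper: lift $h$ to some $H \in \Hspace^{1}(\Omega)$, split off a homogeneous part in $\Hspace^{1}_{\Gamma_{0}}(\Omega)$, and apply the Lax--Milgram theorem to the resulting weak formulation. You additionally supply details the paper leaves implicit --- coercivity via the positivity of $T$, $T^{-1}\in\Lp{\infty}(\Omega)^{n\times n}$ and Poincar\'e's inequality, plus the recovery of the strong form including the Neumann condition on $\Gamma_{1}$ --- and you correctly keep the coefficient $T$ in the weak formulation, which the paper's displayed equation drops (evidently a typo).
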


\begin{proof}
  Let $H \in \Hspace^{1}(\Omega)$ such that $h = \boundtr H \big\vert_{\Gamma_{0}}$.
  The weak formulation of~\eqref{eq:equilibrium-system} is: find a $\tilde{w} \in \Hspace^{1}_{\Gamma_{0}}(\Omega)$ such that
  \begin{align*}
    \scprod{\grad \tilde{w}}{\grad v}_{\Lp{2}(\Omega)} = -\scprod{\grad H}{\grad v}_{\Lp{2}(\Omega)}
     \end{align*}
  for all
   $v \in \Hspace^{1}_{\Gamma_{0}}(\Omega)$.
 Then $w_{\mathrm{e}} = \tilde{w} + H$. By the Lax-Milgram theorem this is solvable.
\end{proof}

\section{G{\aa}rding Inequalities}

In this section we want to show that there is a Fredholm alternative for sesquilinear forms that are non-coercive, but satisfy a \emph{G{\aa}rding inequality}.
In~\cite{yosida-fa} this concept is presented in a less abstract way for differential operators.

\begin{definition}\label{def:garding-inequality}
  Let $X_{0}$ and $X_{1}$ be Hilbert spaces and $K\colon X_{1} \to X_{0}$ be a compact linear operator.
  A sesquilinear form $b\colon X_{1} \times X_{1} \to \C$ satisfies a \emph{G{\aa}rding inequality}, if
  \begin{equation*}
    \Re b(u,u) \geq C_{1} \norm{u}_{X_{1}}^{2} - C_{2} \norm{K u}_{X_{0}}^{2}
    \quad\text{for all}\quad
    u \in X_{1}.
  \end{equation*}
\end{definition}

In most applications $K$ is a compact embedding, e.g.\ the embedding of $\Hspace^{1}(\Omega)$ into $\Lp{2}(\Omega)$. Note that (by Lax-Milgram, e.g.~\cite{evans-pdes}) for every bounded sesquilinear form $b(\cdot, \cdot)$ on a Hilbert space there exists a bounded operator $B\colon X_{1} \to X_{1}$ such that
\[
  b(u,v) = \scprod{B u}{v}_{X_{1}}
  \quad\text{for all}\quad
  u,v \in X_{1}.
\]
The operator $B$ is injective if and only if $b(\cdot,\cdot)$ is non-degenerated.

\begin{theorem}[Fredholm alternative]\label{th:fredholm-alternative-garding}
  Let $b(\cdot,\cdot)$ be a bounded sesquilinear form on $X_{1}$ that satisfies a G{\aa}rding inequality.
  If the corresponding operator $B$ is injective ($b(\cdot,\cdot)$ is non-degenerated), then $B$ is bijective.
\end{theorem}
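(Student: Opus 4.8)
The plan is to reduce the statement to the classical Fredholm alternative for operators of the form \emph{identity minus compact}, by exploiting the G{\aa}rding inequality to manufacture a genuinely coercive form out of $b$.

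First I would introduce the corrected form
\[
  \tilde{b}(u,v) \coloneqq b(u,v) + C_{2}\scprod{Ku}{Kv}_{X_{0}}.
\]
This form is bounded, since $b$ is bounded and $K$ is bounded, and it is \emph{coercive}: the G{\aa}rding inequality yields
\[
  \Re \tilde{b}(u,u) = \Re b(u,u) + C_{2}\norm{Ku}_{X_{0}}^{2} \geq C_{1}\norm{u}_{X_{1}}^{2}
\]
for all $u \in X_{1}$. Hence the Lax--Milgram theorem applies to $\tilde{b}$ and produces a \emph{bijective} bounded operator $\tilde{B}\colon X_{1}\to X_{1}$ satisfying $\tilde{b}(u,v) = \scprod{\tilde{B}u}{v}_{X_{1}}$.

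Next I would identify $\tilde{B}$ as a compact perturbation of $B$. Writing $\scprod{Ku}{Kv}_{X_{0}} = \scprod{K^{\adjunsymb}Ku}{v}_{X_{1}}$ shows $\tilde{B} = B + C_{2}K^{\adjunsymb}K$, where $K^{\adjunsymb}K$ is compact because $K$ is compact. Consequently
\[
  \tilde{B}^{-1}B = \idop - C_{2}\,\tilde{B}^{-1}K^{\adjunsymb}K \eqqcolon \idop - C',
\]
and $C' = C_{2}\tilde{B}^{-1}K^{\adjunsymb}K$ is compact, being the composition of a bounded operator with a compact one. By hypothesis $B$ is injective, and since $\tilde{B}$ is bijective the operator $\tilde{B}^{-1}B = \idop - C'$ is injective as well. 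The classical Riesz--Schauder theory then guarantees that injectivity of $\idop - C'$, for compact $C'$, forces surjectivity, so $\idop - C'$ is bijective; therefore $B = \tilde{B}(\idop - C')$ is a composition of two bijections and hence bijective.

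The conceptual crux — and the only genuine obstacle — is the first reduction: recognizing that adding the coercive compact term $C_{2}\scprod{Ku}{Kv}_{X_{0}}$ upgrades the merely G{\aa}rding form to a coercive one, so that Lax--Milgram is applicable and $B$ inherits the structure \emph{invertible minus compact}. Once this is in place, the remaining steps are routine invocations of Lax--Milgram, the compactness of $K^{\adjunsymb}K$, and the classical Fredholm alternative for $\idop - C'$.
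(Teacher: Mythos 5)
Your proposal is correct and follows essentially the same route as the paper's own proof: the same corrected form $\tilde{b}(u,v) = b(u,v) + C_{2}\scprod{Ku}{Kv}_{X_{0}}$, Lax--Milgram for the coercive $\tilde{b}$, the factorization of $B$ as $\tilde{B}$ composed with identity-minus-compact, and Riesz--Schauder to pass from injectivity to surjectivity. Your write-up is, if anything, slightly more explicit than the paper's in justifying why $\idop - C'$ is injective.
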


\begin{proof}
  The sesquilinear form $b$ satisfies the G{\aa}rding inequality
  \[
    \Re b(u,u) \geq C_{1} \norm{u}_{X_{1}}^{2} - C_{2} \norm{K u}_{X_{0}}^{2}
    \quad\text{for all}\quad
    u \in X_{1}.
  \]
  Hence, $\tilde{b}(u,v) \coloneqq b(u,v) + C_{2} \scprod{Ku}{Kv}_{X_{0}}$ is coercive. The corresponding operator $\tilde{B}$ is given by $B + C_{2}K\adjun K$. By the Lax-Milgram theorem $\tilde{B}$ is bijective. Note that
  \[
    B = \tilde{B} - C_{2}K\adjun K = \tilde{B} ( \opid - \tilde{B}^{-1}C_{2}K\adjun K).
  \]
  The injectivity of $B$ implies that $1$ is not an eigenvalue of $\tilde{B}^{-1}C_{2}K\adjun K$ and since $\tilde{B}^{-1}C_{2}K\adjun K$ is compact, it is surjective. Consequently $B$ is also surjective.
\end{proof}

\vspace*{5mm}\hrule\vspace*{3mm}

\end{document}